\newtheorem{theorem}{Theorem}[section]
\newtheorem{lemma}[theorem]{Lemma}
\newtheorem{proposition}[theorem]{Proposition}
\newtheorem{corollary}[theorem]{Corollary}
\theoremstyle{definition}
\newtheorem{examp}[theorem]{Example}
\theoremstyle{remark}
\numberwithin{equation}{section}
\newcommand{\diams}{\unskip\nobreak\hfil\penalty50%
\hskip1em\hbox{}\nobreak\hfil%
$\diamondsuit$\parfillskip=0pt\finalhyphendemerits=0}
\newcommand{\bfind}[1]{\index{#1}{\bf #1}}
\newcommand{\n}{\par\noindent}
\newcommand{\sn}{\par\smallskip\noindent}
\newcommand{\bn}{\par\bigskip\noindent}
\newcommand{\pars}{\par\smallskip}
\newcommand{\parm}{\par\medskip}
\newcommand{\cO}{\mathcal{O}}
\newcommand{\cE}{\mathcal{E}}
\newcommand{\td}{d} 
\newcommand{\isom}{\simeq}
\newcommand{\ac}{^{\rm ac}}
\newcommand{\chara}{\mbox{\rm char}\,}
\newcommand{\Gal}{\mbox{\rm Gal}\,}
\newcommand{\dist}{\mbox{\rm dist}\,}
\newcommand{\N}{\mathbb N}
\newcommand{\R}{\mathbb R}
\newcommand{\Z}{\mathbb Z}
\newcommand{\F}{\mathbb F}
\begin{document}

\title[Finitely many defect extensions of prime degree]{Valued fields with finitely many defect
extensions of prime degree}

\author{F.-V.~Kuhlmann}
\address{Institute of Mathematics, University of Szczecin, ul.~Wielkopolska 15
70-451 Szczecin, Poland}
\email{fvk@usz.edu.pl}
\thanks{The author is supported by Opus grant 2017/25/B/ST1/01815 from the National Science Centre of Poland.
This paper was started while the author was on a research visit to the Department of Mathematics at the
Federal University of S\~{a}o Carlos, supported by the S\~{a}o Paulo Research Foundation (FAPESP), grant number
2017/17835-9. The author would like to thank the members of the department for their hospitality, and
Josnei Novacoski for drawing his attention to the gap in an earlier paper that is now filled at the end of this
paper. He also thanks the referee for the careful reading of the manuscript and many helpful corrections and
comments.}

\subjclass[2010]{12J10, 13A18}
\keywords{Valued field, deeply ramified field, Artin-Schreier extension, Kummer extension, defect}

\date{October 1, 2020}

\begin{abstract}
We prove that a valued field of positive characteristic $p$ that has only finitely many distinct Artin-Schreier
extensions (which is a property of infinite NTP$_2$ fields) is dense in its perfect hull. As a consequence, it is
a deeply ramified field and has $p$-divisible value group and perfect residue field. Further, we prove a partial
analogue for valued fields of mixed characteristic and observe an open problem about 1-units in this setting.
Finally, we fill a gap that occurred in a proof in an earlier paper in
which we first introduced a classification of Artin-Schreier defect extensions.
\end{abstract}

\maketitle

%
%
\section{Introduction}
It is shown in \cite[Theorem 3.1]{CKS} that an infinite field of positive characteristic that is definable
in an NTP$_2$ theory has only finitely many Artin-Schreier extensions. NTP$_2$ is a large class of first order
theories (``without the tree property of the second kind'') defined by S.~Shelah generalizing simple and NIP
theories. Algebraic examples of NTP$_2$ structures are given by ultraproducts of $p$-adic fields and certain
valued difference fields. For the precise definition (which we will not need in this paper) and for further
examples, we refer the reader to \cite{CKS}.

In \cite[Proposition~3.2]{CKS} it is shown that the value group of a valued field of characteristic $p>0$
which has only finitely many Artin-Schreier extensions is $p$-divisible. In this note, assuming throughout that
the valuation is nontrivial, we prove a stronger result,
namely, that such a field is dense in its perfect hull. For a valued field of characteristic $p>0$,
this is equivalent to being a deeply ramified field in the sense of \cite{GR}. The density also
implies that the value group is $p$-divisible and the residue field is perfect. By
Theorem~\ref{charst} in Section~\ref{sectdr},
every deeply ramified field of positive characteristic is a semitame field, which we shall define now.

Take a valued field $(K,v)$ of arbitrary characteristic. Its value group will be denoted by $vK$, and
its residue field by $Kv$. Accordingly, the value of an element $a\in K$ will be denoted by $va$, and its residue
by $av$. We say that $(K,v)$ is \bfind{semitame} if either $\chara Kv=0$, or $\chara Kv=p>0$ and the following two
axioms hold:
\sn
{\bf (DRst)} the value group $vK$ is $p$-divisible,
\sn
{\bf (DRvr)} the homomorphism
\[
\cO_{K^c}/p\cO_{K^c} \ni x\mapsto x^p\in \cO_{K^c}/p\cO_{K^c}
\]
is surjective, where $\cO_{K^c}$ denotes the valuation ring of the completion $K^c$ of $(K,v)$.
Note that this condition implies that the residue field $Kv$ is perfect.

\pars
The following result will be proven in Section~\ref{sectcharppf}:
\begin{theorem}                  \label{MTfinAS}
Take a valued field $(K,v)$ of characteristic $p>0$. If $K$ admits only finitely many distinct Artin-Schreier
extensions, then $(K,v)$ is a semitame field.
\end{theorem}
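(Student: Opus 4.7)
The plan is to reduce Theorem~\ref{MTfinAS} to showing that $(K,v)$ is dense in its perfect hull $K^{1/p^\infty}$. By the discussion preceding the statement, such density is equivalent, in characteristic $p>0$, to $(K,v)$ being a deeply ramified field in the sense of \cite{GR}, and Theorem~\ref{charst} then delivers that a deeply ramified field of positive characteristic is semitame. Thus the whole substance of the argument lies in establishing this density assertion.

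To establish density in $K^{1/p^\infty}$ it suffices to prove $K$ is dense in $K^{1/p}$. Indeed, raising to the $p$-th power multiplies valuations by $p$, so density of $K$ in $K^{1/p}$ propagates (via extraction of $p$-th roots, using $p$-divisibility of $vK$ to divide valuations by $p$) to density of $K^{1/p}$ in $K^{1/p^2}$; transitivity of density then gives $K$ dense in $K^{1/p^n}$ for all $n$ by induction. The required $p$-divisibility of $vK$ is furnished by \cite[Proposition~3.2]{CKS}, which already uses only the hypothesis that $K$ has finitely many Artin-Schreier extensions.

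I would prove density of $K$ in $K^{1/p}$ by contradiction. Assume there is $a\in K^{1/p}\setminus K^c$, so that $a^p\in K$ while the set $\{v(a-b):b\in K\}$ is bounded above; by translating $a$ by an element of $K$ if necessary, I reduce to the case that the supremum $\delta$ is not attained. For each $b\in K$ with $0<v(a-b)<\delta$, the element $\eta_b:=a-b$ satisfies $\eta_b^p=a^p-b^p\in K$ with $\eta_b\notin K$. To each such $b$ I would associate an Artin-Schreier polynomial $X^p-X-f_b\in K[X]$, with $f_b$ constructed from $\eta_b^p$ (for instance $f_b=1/\eta_b^p$), adjusted so that a root of the polynomial lies close, in valuation, to a suitable element of the immediate purely inseparable extension $K(a)/K$. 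Letting $v(a-b)$ run cofinally up to $\delta$, I expect to realise infinitely many pairwise distinct classes $[f_b]$ in $K/\wp(K)$, where $\wp(X)=X^p-X$, and hence infinitely many nonisomorphic Artin-Schreier extensions of $K$, contradicting the hypothesis.

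The main obstacle is precisely this distinctness step: verifying $f_b-f_{b'}\notin\wp(K)$ for a cofinal family of pairs $(b,b')$. The tool to hand is the standard valuation analysis of $\wp$: for $y\in K$ with $vy<0$ one has $v\wp(y)=pvy\in p\cdot vK$, so any element of $\wp(K)$ of sufficiently negative valuation must have valuation in $p\cdot vK$. Combining this with the $p$-divisibility of $vK$ and, crucially, the non-attainment of $\delta$, I would argue that the differences $f_b-f_{b'}$ cannot lie in $\wp(K)$ for enough pairs, completing the contradiction. Once $K$ is shown dense in $K^{1/p}$, and hence in $K^{1/p^\infty}$, the cited equivalence together with Theorem~\ref{charst} finishes the proof.
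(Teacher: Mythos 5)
Your high-level strategy is exactly the paper's: use Theorem~\ref{charst} to reduce semitameness to density of $K$ in its perfect hull, reduce that to density of $K$ in $K^{1/p}$ (this is in effect what Lemma~\ref{pidegp} does), and derive infinitely many Artin-Schreier extensions from a degree-$p$ purely inseparable element $a$ with $v(a-K)$ bounded. But the step you yourself label ``the main obstacle'' --- proving that the extensions produced are pairwise \emph{distinct} --- is the entire content of the theorem, and it is missing. Worse, the family you propose varies the wrong parameter. Setting $\eta_b=a-b$ and letting $b$ range over approximations of $a$ changes neither the field $K(\eta_b)=K(a)$ nor, by part 5) of Proposition~\ref{propva-K}, the set $v(\eta_b-K)=v(a-K)$; so the one invariant the paper has for telling Artin-Schreier extensions apart (Corollary~\ref{ASinv}: $v(\vartheta-K)$ depends only on the extension, not on the generator) is not obviously separated by your family. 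Any separation would have to come from the nonlinear passage $\eta_b\mapsto 1/\eta_b^p$, whose effect on $v(\,\cdot\,-K)$ you do not compute, and the fallback you suggest (valuations of elements of $\wp(K)$ lying in $p\cdot vK$) cannot by itself exclude $f_b-f_{b'}\in\wp(K)$. (Also, your remark that one may ``translate $a$ to make the supremum unattained'' is vacuous for the same reason: translation leaves $v(a-K)$ unchanged.)

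The paper closes this gap by varying a scaling factor instead of the approximating element: it fixes one $\eta$ with $\eta^p\in K$ and $v(\eta-K)$ bounded above in $vK$, picks $d\in K$ with $vd$ large, and uses the Artin-Schreier polynomials $X^p-X-\eta^p/d^{np}$. Because the root $\vartheta_{d^n}$ is then within $v(\vartheta_{d^n}-K)$ of the \emph{affine} image $\eta/d^n$ of $\eta$ (Lemma~\ref{dXeqc}), part 5) of Proposition~\ref{propva-K} yields the exact formula $v(\vartheta_{d^n}-K)=v(\eta-K)-vd^n$, and Lemma~\ref{distinct} --- using precisely the boundedness of $v(\eta-K)$ --- shows these shifted sets are pairwise distinct, whence the extensions are pairwise distinct by Corollary~\ref{ASinv}. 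To complete your argument you would need either this scaling device or an explicit computation showing that your classes $[f_b]$ in $K/\wp(K)$ genuinely separate; as written the proof does not close.
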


\begin{corollary}
A nontrivially valued field of positive characteristic that is definable in an NTP$_2$ theory is a semitame field.
\end{corollary}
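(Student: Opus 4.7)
The corollary is essentially a direct composition of Theorem~\ref{MTfinAS} with the model-theoretic result \cite[Theorem~3.1]{CKS} cited in the introduction, so my plan is very short and has no genuine obstacle; the substance of the work lies in Theorem~\ref{MTfinAS}.

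The first step is a sanity check on the hypothesis: I would verify that a nontrivially valued field $(K,v)$ of characteristic $p>0$ is automatically infinite. Indeed, if $a\in K$ with $va\neq 0$, then the values $n\cdot va$ for $n\in\N$ are pairwise distinct in $vK$, so the elements $a^n$ are pairwise distinct in $K$. This is needed because the cited theorem of \cite{CKS} is stated for \emph{infinite} NTP$_2$ fields.

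Next I would invoke \cite[Theorem~3.1]{CKS}: since $K$ is an infinite field of positive characteristic definable in an NTP$_2$ theory, it admits only finitely many distinct Artin-Schreier extensions. Finally, I would apply Theorem~\ref{MTfinAS} to the valued field $(K,v)$ to conclude that it is semitame. No further step is required, and no estimate or construction is needed beyond what is already packaged in these two results.
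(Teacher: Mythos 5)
Your proposal is correct and is exactly the argument the paper intends: the corollary is an immediate combination of \cite[Theorem~3.1]{CKS} with Theorem~\ref{MTfinAS}. Your additional check that a nontrivially valued field of positive characteristic is infinite (so that the hypothesis of the cited NTP$_2$ result is met) is a sensible detail that the paper leaves implicit.
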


We will prove Theorem~\ref{MTfinAS} by showing that if $(K,v)$ is not dense in its perfect hull, then it admits
infinitely many distinct Artin-Schreier extensions; see Proposition~\ref{propnotdense}.
However, we will show more than this. We are interested in Galois \bfind{defect extensions} of degree $p$ a prime. 
These are immediate Galois extensions $(L|K,v)$ of degree $p$ of valued fields for which $v$ has a unique 
extension from $K$ to $L$ (in this case we must have that $p=\chara Kv$). Here, $(L|K,v)$
denotes an extension $L|K$ of fields with $v$ a valuation on $L$ and $K$ endowed with its restriction; the
extension is said to be \bfind{immediate} if the canonical embeddings of $vK$ in $vL$ and of $Kv$ in $Lv$ are
onto. For more details on the defect see Section~\ref{sectfi}, and for further background, see
\cite{BKdrf,Ku1,Ku6}.

In \cite{BKdrf} a classification of Galois defect extensions $\cE=(L|K,v)$ of prime degree $p$ is given as follows.
We show that the set
\[
\Sigma_\sigma\>:=\> \left\{ v\left( \left.\frac{\sigma f-f}{f}\right) \right| \, f\in L^{\times} \right\} 
\]
is independent of the choice of a generator $\sigma$ of $\Gal (L|K)$, and we
denote it by $\Sigma_\cE\,$. We say that $\cE$ has \bfind{independent defect} if
\[
\Sigma_{\cE}\>=\> \{\alpha\in vK\mid \alpha >H_\cE\} 
\]
for some proper convex subgroup $H_\cE$ of $vK$;  otherwise we say that $\cE$ has \bfind{dependent defect}. If 
$vK$ is archimedean ordered, or in other words, $(K,v)$ is of \bfind{rank $1$}, then $H_\cE$ can only be equal
to $\{0\}$.
\pars
In Section~\ref{sectcharppf}, we will prove:
\begin{theorem}                                \label{MTdepAS}
Take a valued field $(K,v)$ of characteristic $p>0$. If it admits an Artin-Schreier extension with dependent
defect, then it admits infinitely many distinct Artin-Schreier extensions with dependent defect.
\end{theorem}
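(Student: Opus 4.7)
Write the given Artin--Schreier extension as $\mathcal{E}=(L|K,v)$ with $L=K(\vartheta)$, $\wp(\vartheta):=\vartheta^p-\vartheta=a\in K$, and fix the generator $\sigma\in\Gal(L|K)$ with $\sigma\vartheta=\vartheta+1$. The plan is to produce an infinite family $\{\mathcal{E}_i\}_{i\in\mathbb{N}}$ of pairwise distinct Artin--Schreier extensions of $K$ by perturbing $a$ with a well-chosen sequence $\{b_i\}\subseteq K$, and to verify that each $\mathcal{E}_i$ again has dependent defect.

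First I would reformulate the hypothesis in terms of the distance set $v(\vartheta-K):=\{v(\vartheta-c):c\in K\}\subseteq vL$. For an Artin--Schreier defect extension, $\Sigma_\mathcal{E}$ can be computed directly from this set (as in the description used in \cite{BKdrf}), and dependent defect translates to the statement that the cut in $vK$ cut out by $v(\vartheta-K)$ is not the principal cut of a proper convex subgroup, i.e.\ its lower side is not closed under addition. From this I would extract an infinite sequence $c_1,c_2,\ldots\in K$ and values $\gamma_i:=v(a-\wp(c_i))\in vK$ that increase strictly cofinally in $\Sigma_\mathcal{E}$ while witnessing the non-convexity (for instance, one can arrange $\gamma_{i+1}>p\gamma_i$ and all $\gamma_i$ in the ``bad" part of the cut).

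Next I would define the perturbations: choose $b_i\in K$ with $v(b_i)=\gamma_i$, set $a_i:=a+b_i$, and let $\vartheta_i$ satisfy $\wp(\vartheta_i)=a_i$, with $\mathcal{E}_i:=(K(\vartheta_i)|K,v)$. Each $\mathcal{E}_i$ is a genuine degree-$p$ extension because $a_i\in\wp(K)$ together with $a\notin\wp(K)$ would force $b_i\in a+\wp(K)$, which contradicts the valuation bookkeeping (the value $v(b_i)$ lies strictly above the cut where $a$ accumulates). The core verification is that $\mathcal{E}_i$ still has dependent defect: I would argue that for sufficiently large $i$ the distance set $v(\vartheta_i-K)$ agrees with $v(\vartheta-K)$ up to translations bounded by $\gamma_i$, so the same non-convex cut persists in $\Sigma_{\mathcal{E}_i}$. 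To distinguish the extensions, recall that $K(\wp^{-1}(u))=K(\wp^{-1}(u'))$ iff $u'=ku+\wp(c)$ for some $k\in\mathbb{F}_p^\times$, $c\in K$. If this held for $a_i$ and $a_j$ with $i<j$, expansion gives $(1-k)a+b_i-kb_j\in\wp(K)$. A valuation comparison, using the rapid growth of the $\gamma_i$ and the known structure of $v(\wp(K))$ (namely $v(\wp(c))=v(c)$ for $v(c)>0$ and $=pv(c)$ for $v(c)<0$), rules this out for all but finitely many pairs, and one can pass to a subsequence to obtain genuinely infinitely many distinct $\mathcal{E}_i$.

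The main obstacle is the dependent-defect preservation in Step 3. Independence of defect is the generic behavior, so a careless choice of $b_i$ could easily produce an $\mathcal{E}_i$ whose $\Sigma_{\mathcal{E}_i}$ \emph{is} an upper cut of a convex subgroup, turning the defect independent. This is why the $\gamma_i$ must be chosen inside the portion of the cut that witnesses non-convexity in $vK$, and why the growth condition on the $\gamma_i$ is needed: it guarantees that perturbing $a$ by $b_i$ only shifts the distance set on a piece lying above the critical non-convex region, leaving the witnessing structure of $\Sigma_{\mathcal{E}_i}$ intact. I expect this step to require the sharpest use of the classification of Artin--Schreier defect extensions from \cite{BKdrf}.
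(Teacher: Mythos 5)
Your strategy (perturb the Artin--Schreier element $a$ additively by elements $b_i$ of carefully chosen value) is genuinely different from the paper's, but as sketched it has two gaps that I do not see how to close. First, the distinctness argument. You want to rule out $a_i-ka_j\in\wp(K)$ by ``valuation bookkeeping'' using that $v(\wp(c))=v(c)$ for $v(c)>0$ and $=pv(c)$ for $v(c)<0$. But your $\gamma_i=v(a-\wp(c_i))=v(\wp(\vartheta-c_i))$ are themselves values of elements of $\wp(K)$ minus $a$; in the relevant (dependent-defect) situation one has $v(\vartheta-K)<0$, so $\gamma_i=pv(\vartheta-c_i)$ with $v(\vartheta-c_i)\in vK_{<0}$ --- exactly the values attained by $\wp(c)$ for $v(c)<0$. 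Hence $v(b_i-b_j)=\gamma_i$ tells you nothing about whether $b_i-b_j\in\wp(K)$, and for $k\neq 1$ the element $(1-k)a$ is itself arbitrarily well approximable by $\wp(K)$ up to the cut, so no finite amount of value comparison decides membership in $\wp(K)$. The same problem already affects your nontriviality claim: if $a_i\in\wp(K)$ then $v(a-\wp(c))=v(b_i)=\gamma_i$ for some $c$, which is not a contradiction since $\gamma_i$ lies \emph{inside} $v(a-\wp(K))$ by your own construction. Second, there is a structural tension: you arrange the perturbation so that $v(\vartheta_i-K)=v(\vartheta-K)$ (to preserve dependence of the defect), but $v(\vartheta-K)$ is precisely the invariant (Corollary~\ref{ASinv}) that one would use to tell Artin--Schreier extensions apart; by freezing it you deprive yourself of the only readily available distinguishing tool, and the dependent-defect preservation itself is only asserted, not proved.

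The paper's proof sidesteps both difficulties by changing the invariant rather than fixing it. It uses the result from \cite{BKdrf} that \emph{every} Artin--Schreier extension with dependent defect arises, via the transformation of Section~\ref{secttransf}, from an immediate purely inseparable extension $(K(\eta)|K,v)$ of degree $p$ with $v(\eta-K)$ bounded in $vK$. Applying that transformation with the parameters $d^n$, $n\in\N$, produces Artin--Schreier generators $\vartheta_{d^n}$ with $v(\vartheta_{d^n}-K)=v(\eta-K)-vd^n$; these sets are pairwise distinct by Lemma~\ref{distinct}, so the extensions are pairwise distinct by Corollary~\ref{ASinv}, each is immediate with defect $p$ by Lemma~\ref{ueGp1}, and each has dependent defect \emph{by definition}, being obtained from a purely inseparable defect extension by the transformation. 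If you want to pursue your additive-perturbation idea, you would need a substitute for Corollary~\ref{ASinv} capable of separating extensions with identical sets $v(\vartheta-K)$, and an actual computation of $\Sigma_{\cE_i}$; neither is supplied by the tools in this paper.
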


In \cite{Ku6} the classification was originally introduced only for valued fields of positive characteristic.
An Artin-Schreier defect extension $(L|K,v)$ was said to have dependent defect if in a certain way it 
depends on immediate purely inseparable extensions of degree $p$ which do not lie in the completion of $(K,v)$.
It was then shown in Section~4.2 of \cite{Ku6} that an Artin-Schreier extension has dependent defect if it is 
obtained from such a purely inseparable defect extension by a certain transformation of an inseparable minimal 
polynomial which makes it separable. We will describe this transformation in Section~\ref{secttransf} and use it 
for the proof of Theorem~\ref{MTdepAS}.

In \cite{BKdrf} we have made the above more precise by showing that {\it every} Artin-Schreier extension with 
dependent defect can be obtained by such a transformation. It is also shown that the new definition of the 
classification given in \cite{BKdrf} is compatible with the one given in \cite{Ku6}. 

\pars
The new definition of the classification became necessary in order to generalize the original definition to the 
case of valued fields $(K,v)$ of characteristic $0$ with residue field $Kv$ of positive characteristic $p$ 
(\bfind{mixed characteristic}), where we cannot rely on nontrivial purely inseparable extensions. We will now 
present a partial analogue of Theorem~\ref{MTdepAS} in the mixed characteristic case. To
avoid a number of technical details in the present paper, we will restrict our scope to valued fields $(K,v)$  
of rank $1$. We will also assume that $K$ contains a primitive $p$-th root of
unity. We then consider Kummer defect extensions of degree $p$, that is, Galois defect extensions of $(K,v)$
generated by some $\eta\notin K$ such that $\eta^p\in K$. Because such extensions are immediate,
we can show that we can assume $\eta$ to be a \bfind{$1$-unit}, i.e., $v(\eta-1)>0$ (and thus, $v\eta=0$); see 
Section~\ref{sectK}.

We need some more preparation. Take an extension $(L|K,v)$ and $a\in L$. We define:
\[
v(a-K)\>:=\>\{v(a-c)\mid c\in K\}\>;
\]
for details on this set, see Section~\ref{sectva-K}.
For $\alpha$ in the divisible hull $\widetilde{vK}$ of the value group $vK$, we will write $v(a-K)<\alpha$
if $v(a-c)<\alpha$ for all $c\in K$, and similarly for ``$\leq$'' in place of ``$<$''. We write
\[
v(a-K)<\!\!|\,\alpha
\]
if $v(a-K)<\alpha$ and $v(a-K)$ is bounded away from $\alpha$ in  $\widetilde{vK}$, i.e.,
there is $\beta\in \widetilde{vK}$ such that $v(a-K)\leq\beta<\alpha$.

A Kummer defect extension of degree $p$ generated by a $1$-unit $\eta$ as above always satisfies
\begin{equation}                             \label{Kde}
v(\eta-K)\><\>\frac{vp}{p-1}\>,
\end{equation}
see Section~\ref{sectK}. In the case of archimedean value groups (where $H_\cE$ cn only be equal to $\{0\}$), the
definition we have given for the extension to have dependent defect is equivalent to the condition that
$v(\eta-K)<\!\!|\, \frac{vp}{p-1}$ (see our discussion at the end of Section~\ref{sectK}).

Note that in \cite{BKdrf} in place of $v(\eta-K)$ a more flexible invariant $\dist(\eta,K)$ is used for our work
with defect extensions. We will give its definition in Section~\ref{sectdist} to enable the reader to relate the results in \cite{BKdrf} and \cite{Ku6} to those in the present paper.

\pars
In view of Theorem~\ref{MTdepAS} it seems reasonable to conjecture that if $(K,v)$ admits a Kummer extension
with dependent defect, then it admits infinitely many distinct Kummer extensions with dependent defect. By
\cite[part 2) of Theorem 1.5]{BKdrf}, this would imply that if $(K,v)$ admits only finitely many Kummer defect
extensions, then it is semitame, in perfect analogy to the equal positive characteristic case. However, so far we
have only been able to prove a weaker result. We will say that a Kummer defect extension as above has
\bfind{super-dependent defect} if
\begin{equation}                                   \label{sd}
v(\eta-K)<\!\!|\, \frac{vp}{p}\>.
\end{equation}
\begin{theorem}                                      \label{MTmixed}
Take a valued field $(K,v)$ of mixed characteristic and rank 1 which contains a primitive $p$-th root of
unity. If it admits a Kummer extension of degree $p$ with super-dependent defect, then it admits infinitely
many distinct Kummer extensions of degree $p$ with super-dependent defect.
\end{theorem}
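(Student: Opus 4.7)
The strategy mirrors that of Theorem~\ref{MTdepAS}: starting from the given super-dependent-defect Kummer extension $L = K(\eta)|K$, with $\eta$ a $1$-unit satisfying $\eta^p = a \in K^\times$ and $v(\eta - K) \leq \beta < vp/p$, I perturb the Kummer equation additively. For each $\epsilon \in K^\times$ with $v\epsilon$ in the nonempty interval $(p\beta, vp) \subset \widetilde{vK}$, let $\eta_\epsilon \in \overline{K}$ be a $1$-unit root of $X^p = a + \epsilon$ (which exists since $a + \epsilon \equiv 1 \pmod{\cM}$). The plan is to show that (i) each $(K(\eta_\epsilon)|K, v)$ is itself a super-dependent-defect Kummer extension, and (ii) for suitable choices, infinitely many of the $K(\eta_\epsilon)$ are pairwise distinct.

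For (i), the key calculation is $v(\eta_\epsilon - c) = v(\eta - c)$ for every $c \in K$. From the factorization
\[
\eta_\epsilon^p - c^p \;=\; (\eta_\epsilon - c) \cdot \sum_{i=0}^{p-1}\eta_\epsilon^{p-1-i} c^i,
\]
setting $u = \eta_\epsilon - c$ and expanding each $c^i = (\eta_\epsilon - u)^i$ binomially, the sum on the right equals $\sum_{m=1}^{p} \binom{p}{m} \eta_\epsilon^{p-m} (-u)^{m-1}$ and has valuation $\min((p-1)vu,\, vp)$. The hypothesis $v(\eta-c)\leq\beta<vp/p$ forces $p\cdot v(\eta-c)<vp$, so the analogous identity for $\eta$ yields $v(a - c^p) = p\cdot v(\eta - c) \leq p\beta < v\epsilon$. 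Hence $v(a + \epsilon - c^p) = v(a - c^p) = p v(\eta - c)$, and the identity for $\eta_\epsilon$ then returns $v(\eta_\epsilon - c) = v(\eta - c)$. Consequently $v(\eta_\epsilon - K) = v(\eta - K)$, which carries the same super-dependent bound. Properness and defect~$p$ follow: the bounded distance $v(a+\epsilon - K^p)\leq p\beta < vp$ forces $a + \epsilon \notin (K^\times)^p$, and $v(\eta_\epsilon - K) \leq \beta \in \widetilde{vK}$ forces $\eta_\epsilon \notin K^c$, so $X^p - (a+\epsilon)$ is irreducible over the Henselian field $K^c$, giving unique extension of $v$; combined with $e = f = 1$ this yields defect $p$.

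The main obstacle is (ii), the distinctness. By Kummer theory, $K(\eta_{\epsilon_n}) = K(\eta_{\epsilon_m})$ iff $(a+\epsilon_n)(a+\epsilon_m)^{-k} \in (K^\times)^p$ for some $k \in \{1, \dots, p-1\}$. The case $k = 1$ reduces to the $1$-unit $1 + (\epsilon_n - \epsilon_m)/(a+\epsilon_m)$ being a $p$-th power; since $(1+w)^p - 1 = w^p + p R(w)$ with the $w^p$ dominant for $vw < vp/(p-1)$, any such $p$-th-power $1$-unit has valuation in $p\cdot vK$ and normalized leading residue in $(Kv)^p$. One then selects the $\epsilon_n$'s so that the family $\{(\epsilon_n - \epsilon_m)/(a+\epsilon_m)\}_{n \neq m}$ is detectable away from $p$-th powers --- either by arranging $\{v\epsilon_n\}$ to meet infinitely many cosets of $p \cdot vK$ in $vK$, or by varying the normalized residues (in $Kv$, respectively in $\cO_{K^c}/p\cO_{K^c}$) among infinitely many cosets modulo $p$-th powers, or a combination of both. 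This exploits the fact that the existence of a super-dependent-defect Kummer extension forces $K$ to fail at least one of the semitameness axioms (DRst), (DRvr) (in analogy with the implication for the characteristic $p$ case). The cases $k \geq 2$ are ruled out by the extra factor $a^{1-k}\notin(K^\times)^p$ together with the tight control on the $1$-unit part, since a relation $a^{1-k}(1+\text{small}) \in (K^\times)^p$ would place the nontrivial class $[a]^{1-k}$ in an arbitrarily small neighborhood of $1$ in $K^\times/(K^\times)^p$, which our bounded valuation regime prevents.
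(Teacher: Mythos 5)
Part (i) of your construction is sound: for $\epsilon$ with $p\beta<v\epsilon$ the computation (essentially Lemma~\ref{lvep-ap}) gives $v(\eta_\epsilon-c)=v(\eta-c)$ for all $c\in K$, hence $v(\eta_\epsilon-K)=v(\eta-K)$, and together with part 2) of Lemma~\ref{c2} each $(K(\eta_\epsilon)|K,v)$ is a Kummer extension with super-dependent defect. But this is precisely what breaks step (ii): all your extensions carry the \emph{same} invariant $v(\eta_\epsilon-K)$, so the one tool available for telling such extensions apart (Proposition~\ref{dist_approx}) is useless, and you are thrown onto the Kummer-theoretic question of when $(a+\epsilon_n)(a+\epsilon_m)^{-k}$ is a $p$-th power. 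Your treatment of that question is a menu of strategies rather than a proof, and each branch has a concrete failure mode. The values $v\epsilon_n$ are confined to the bounded window $(p\beta,vp)$, and $vK/p\,vK$ may be finite or trivial (for $vK=\Z[1/q]$ with $q\neq p$ there are only $p$ cosets; for $p$-divisible $vK$ there is one), so ``infinitely many cosets of $p\cdot vK$'' need not exist. The residue field $Kv$ may be perfect, killing the leading-residue criterion. And both can happen simultaneously for a field admitting a super-dependent defect extension, since the failure of semitameness can be located entirely in the failure of (DRvr), which lives in $\cO_{K^c}/p\cO_{K^c}$ and is not detected by the value and leading residue of a single element. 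Moreover your criterion (``a $p$-th-power $1$-unit $1+W$ with $vW<vp$ has $vW\in p\cdot vK$ and leading residue in $(Kv)^p$'') is only a first-order obstruction: when both conditions hold one divides off $(1+w)^p$ and must iterate, and nothing in your setup controls where that iteration ends. The cases $k\geq 2$ are likewise only asserted.

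The paper avoids all of this by perturbing multiplicatively instead of additively: via Lemma~\ref{btrf0} and the substitutions $X=dY$, $Y=X-1$ it produces a $1$-unit generator $\eta_d$ with $\eta_d^p=1+\eta^p/d^p\in K$ and $v(\eta_d-K)=v(\eta-K)-vd$. Since $(K,v)$ has rank $1$ and $v(\eta-K)$ is bounded without a maximum, $vK$ is dense, so infinitely many negative values $vd$ satisfying (\ref{ve-Kstrong}) exist; the shifted sets $v(\eta-K)-vd$ are then pairwise distinct, and Proposition~\ref{dist_approx} separates the extensions with no Kummer-theoretic analysis at all. To salvage your additive approach you would have to supply an actual argument separating the classes of the $a+\epsilon_n$ in $K^\times/(K^\times)^p$; as written, the distinctness claim is unproven.
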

\n
This leads us to the following
\n
{\bf Open questions:} Does the above theorem also hold with dependent defect in place of super-dependent defect?
What is (possibly) special about a Kummer extension with dependent defect that is not super-dependent?
\sn
We have observed in earlier work already that the threshold $\frac{vp}{p}$ plays a certain role when dealing
with 1-units in mixed characteristic (see
\cite[Corollary 2.11 d)]{Ku8}), but it is not yet sufficiently understood what exactly this role is.

\parm
In the last section of this paper we extend our study of Artin-Schreier extensions of valued fields in order to
fill a gap that occurred in the proof of Lemma 2.31 of \cite{Ku6}, which gives a criterion for such extensions to
have nontrivial defect.

\parm
For general background on valuation theory, we refer the reader to \cite{[En],[EP],[R],[ZS2]}.
%
%
\section{Preliminaries}                              \label{sectprel}
%

%
%
\subsection{Deeply ramified and semitame fields}                              \label{sectdr}
For a field $K$ of characteristic $p>0$ we denote by $K^{1/p^\infty}$ the perfect hull
of $K$. Further, we set $K^p=\{a^p\mid a\in K\}$ and $K^{1/p}=\{a^{1/p}\mid a\in K\}$; then $K^p$ is a subfield
of $K$, and $K^{1/p}|K$ is a field extension which is trivial if and only if $K$ is perfect.

\pars
In \cite{BKdrf}, the following is proven:
\begin{theorem}                          \label{charst}
Take a nontrivially valued field $(K,v)$ of characteristic $p>0$. Then the following statements are equivalent:
\n
a) $(K,v)$ is a semitame field,
\n
b) $(K,v)$ is a deeply ramified field,
\n
c) $(K,v)$ satisfies (DRvr),
\n
d) the completion of $(K,v)$ is perfect,
\n
e) $(K,v)$ is dense in its perfect hull, i.e., $K^{1/p^\infty}\subset K^c$,
\n
f) $K^p$ is dense in $(K,v)$.
\end{theorem}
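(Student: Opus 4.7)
The plan is to establish the block c) $\Leftrightarrow$ d) $\Leftrightarrow$ e) $\Leftrightarrow$ f) first via a cycle of implications, and then separately reduce a) and b) to condition c).

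For the cycle, the key observation is that in characteristic $p$ the ideal $p\cO_{K^c}$ is zero, so (DRvr) asserts exactly that Frobenius is surjective on $\cO_{K^c}$. This is equivalent to $K^c$ being perfect: given $x\in K^c$, write $x=a/b$ with $a,b\in\cO_{K^c}$ and take $p$-th roots in $\cO_{K^c}$; conversely any $p$-th root in $K^c$ of $x\in\cO_{K^c}$ has value $vx/p\geq 0$ and thus lies in $\cO_{K^c}$. So c) $\Leftrightarrow$ d). For d) $\Rightarrow$ e), a perfect $K^c$ contains $K^{1/p^\infty}$, the perfect hull being the smallest perfect overfield of $K$. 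For e) $\Rightarrow$ f), given $a\in K$ the element $a^{1/p}\in K^{1/p^\infty}\subseteq K^c$ is approximated by some sequence $y_n\in K$, and then $v(y_n^p-a)=p\,v(y_n-a^{1/p})\to\infty$ shows $a$ lies in the closure of $K^p$. For f) $\Rightarrow$ d), given $x=\lim x_n\in K^c$ with $x_n\in K$, density of $K^p$ yields $y_n\in K$ with $v(x_n-y_n^p)\to\infty$; the identity $v(y_n-y_m)=v(y_n^p-y_m^p)/p$ (from $y_n^p-y_m^p=(y_n-y_m)^p$ in characteristic $p$) combined with the ultrametric inequality and the Cauchy property of $(x_n)$ shows $(y_n)$ is Cauchy in $(K,v)$, hence converges to some $y\in K^c$ with $y^p=x$.

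Next I handle a) $\Leftrightarrow$ c). Semitame by definition means (DRst) together with (DRvr), so a) $\Rightarrow$ c) is immediate. For the converse, assume c); I must deduce the $p$-divisibility of $vK$. Given $\alpha\in vK$ with $\alpha\geq 0$, pick $x\in\cO_K$ with $vx=\alpha$; by (DRvr) there is $y\in\cO_{K^c}$ with $y^p=x$, and then $\alpha/p=vy\in vK^c=vK$, using that the value group is preserved under completion. The case $\alpha<0$ is handled by applying this to $-\alpha$.

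The equivalence of b) with the other conditions is the substantive step and I expect it to be the main obstacle. Gabber and Ramero's notion of deeply ramified field \cite{GR} is defined by several equivalent conditions involving K\"ahler differentials of $\cO_K$ over $\Z$ together with almost-mathematical properties of $\cO_K$. In equal characteristic $p$, the differential-theoretic conditions are controlled by the Frobenius behavior of $\cO_K$, so the chain of GR equivalences should collapse to precisely (DRvr). The strategy is to invoke, or reprove in this restricted setting, the relevant specialization of the main Gabber-Ramero equivalence theorem, as carried out in \cite{BKdrf}. The other equivalences reduce to routine manipulations with $p$-th roots, ultrametric inequalities, and the stability of value groups under completion; it is the comparison with the GR framework that requires the genuinely technical input.
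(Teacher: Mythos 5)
Your argument is essentially correct, but it takes a genuinely different (and more self-contained) route than the paper, which offers no proof at all: Theorem~\ref{charst} is simply imported from \cite{BKdrf} with the statement ``In \cite{BKdrf}, the following is proven.'' You instead give direct elementary arguments for the block a) $\Leftrightarrow$ c) $\Leftrightarrow$ d) $\Leftrightarrow$ e) $\Leftrightarrow$ f), and these all check out: the reduction of (DRvr) to surjectivity of Frobenius on $\cO_{K^c}$ via $p\cO_{K^c}=0$, the passage between $\cO_{K^c}$ and $K^c$ using that every element of the fraction field of a valuation ring is a quotient of two ring elements, the identity $v(y^p-a^p)=pv(y-a)$ in characteristic $p$ driving e) $\Rightarrow$ f) and f) $\Rightarrow$ d), and the deduction of $p$-divisibility of $vK$ from (DRvr) using $vK^c=vK$. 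The only cosmetic caveat is that ``sequence'' should be read as ``Cauchy net indexed by a cofinal subset of $vK$'' when the value group is not countably cofinal; the argument is unchanged. What you buy with this is that a), c), d), e), f) become transparent and independent of the almost-ring-theoretic machinery; what you do not escape is condition b): the equivalence with the Gabber--Ramero notion of deeply ramified field \cite{GR} genuinely requires the differential/almost-mathematical analysis carried out in \cite{BKdrf}, and your treatment of it is (as you acknowledge) an appeal to that source rather than a proof. Since the paper relies on \cite{BKdrf} for the entire theorem, your proposal is at least as complete as the paper on every point and strictly more explicit on five of the six equivalences.
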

Here, the property ``dense'' refers to the topology induced by the valuation.
\pars
See \cite{BKdrf} for more details and the connection of semitame and deeply ramified fields with the
classification of defect extensions.

%
%
\subsection{The set $v(a-K)$}                              \label{sectva-K}
Take a totally ordered set $(T,<)$. For a nonempty subset $S$ of $T$ and an element $t\in T$ we will write $S<t$ if
$s<t$ for every $s\in S$. A set $S\subseteq T$ is called an \bfind{initial segment} of $T$ if for each
$s\in S$ every $t <s$ also lies in $S$. Similarly, $S\subseteq T$ is called
a \bfind{final segment} of $T$ if for each $s\in S$ every $t >s$ also lies in $S$.

If $(T,<)$ is an ordered abelian group, $n\in\N$, $\alpha\in T$, and $S$ is an initial segment of $T$, then
\[
nS+\alpha\>:=\>\{n\beta+\alpha\mid \beta\in S\}
\]
is an initial segment of $nT$.

\pars
Take a valued field extension $(K(a)|K,v)$.  We will now collect various properties of the sets $v(a-K)$ and
$v(a-K)\cap vK$.
\begin{proposition}                          \label{propva-K}
Take a valued field extension $(K(a)|K,v)$.
\sn
1) If $(K(a)|K,v)$ is immediate, then $v(a-K)$ has no largest element.
\sn
2) If $v(a-K)$ has no largest element, then $v(a-K)\subseteq vK$.
\sn
3) The set $v(a-K)\cap vK$ is an initial segment of $vK$.
\sn
4) The set $v(a-K)\setminus (v(a-K)\cap vK)$ has at most one element.
\sn
5) For every $c,d\in K$,
\[
v(da+c-K) \>=\> v(a-K)+vd\>.
\]
\sn
6) If $a,b$ are elements in some valued field extension of $(K,v)$ such that $v(a-b)>v(a-K)$, then
$v(a-c)=v(b-c)$ for all $c\in K$ and $v(a-K)=v(b-K)$.
\end{proposition}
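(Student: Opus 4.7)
The plan is to dispatch each of the six items with short arguments based on the ultrametric inequality; immediacy enters only in (1).

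I would start with (5) and (6), which are purely formal. For (5), with $d\neq 0$ the identity $v(da+c-c') = vd + v(a - d^{-1}(c'-c))$, together with the fact that $d^{-1}(c'-c)$ ranges over $K$ as $c'$ does, gives $v(da+c-K) = vd + v(a-K)$. For (6), fix $c\in K$; then $v(a-c)\in v(a-K)$, so the hypothesis yields $v(a-b) > v(a-c)$, and the ultrametric forces $v(b-c) = v((a-c) - (a-b)) = v(a-c)$; letting $c$ vary gives $v(a-K) = v(b-K)$.

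For (2), (3), and (4) I would lean on one recurring observation: if $v(a-c) < v(a-c')$ for some $c,c'\in K$, then $v(c-c') = v(a-c)$, in particular placing the smaller value in $vK$. This gives (2) at once: if $\beta = v(a-c)$ is not maximal in $v(a-K)$, then any witness $c'\in K$ with $v(a-c')>\beta$ forces $\beta = v(c-c')\in vK$. It also gives (4) by contradiction: if $\beta_i = v(a-c_i)$ for $i=1,2$ are two distinct elements of $v(a-K)$ both outside $vK$ and $\beta_1<\beta_2$, then $\beta_1 = v(c_1-c_2)\in vK$, a contradiction. Part (3) is a direct construction: given $\beta=v(a-c)\in vK$ and $\gamma\in vK$ with $\gamma<\beta$, pick $d\in K$ with $vd=\gamma$ and compute $v(a-c-d) = \gamma$.

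The main obstacle --- really the only step requiring anything beyond a one-line ultrametric argument --- is (1). Suppose for contradiction that $\alpha = v(a-c)$ is the largest element of $v(a-K)$. Immediacy of the value group gives $\alpha\in vK$, so I would pick $d\in K^\times$ with $vd=\alpha$; then $(a-c)/d$ has value $0$. Immediacy of the residue field then supplies some $e\in K$ whose residue equals that of $(a-c)/d$, so $v((a-c)/d - e)>0$, whence $v(a-c-de) > \alpha$, contradicting maximality. The argument genuinely needs both halves of the immediacy hypothesis applied in sequence; once (1) is in hand the remaining parts compose a clean sequence of ultrametric observations.
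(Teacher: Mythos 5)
Your proofs of parts 2)--6) are correct and coincide with the paper's: the same ultrametric observation (that the smaller of two values $v(a-c)<v(a-c')$ equals $v(c-c')\in vK$) drives 2), 4) and 6), and 3) and 5) are handled by the same direct computations. The only divergence is in part 1), where the paper simply cites Theorem~1 of Kaplansky's \emph{Maximal fields with valuations I} (the result on pseudo-Cauchy sequences), whereas you supply the standard direct argument: if $\alpha=v(a-c)$ were maximal, immediacy of the value group puts $\alpha$ in $vK$, one divides by $d\in K$ with $vd=\alpha$, and immediacy of the residue field produces $e\in K$ with $v(a-c-de)>\alpha$. Your argument is correct (implicitly assuming $a\notin K$, as the statement intends, so that $v(a-c)\ne\infty$) and has the advantage of being self-contained; the citation route buys brevity at the cost of invoking a black box. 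One small point in your favour: in 5) you explicitly restrict to $d\ne 0$, which is indeed needed for the stated identity to make sense.
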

\begin{proof}
1): This follows from \cite[Theorem 1]{[Ka]}.
\sn
2): Take $c\in K$; we wish to show that $v(a-c)\in vK$. By assumption there is $d\in K$ such that $v(a-d)>v(a-c)$.
Hence $v(a-c)=\min\{v(a-c),v(a-d)\}=v(c-d)\in vK$.
\sn
3): Take $\alpha\in v(a-K)\cap vK$ and $\beta\in vK$ such that $\beta<\alpha$. Choose $b,c\in K$ with $vb=\beta$
and $v(a-c)=\alpha$. Then $c-b\in K$ and $vb=\min\{v(a-c),vb\}=v(a-(c-b))\in v(a-K)$.
\sn
4): Assume that $v(a-c),v(a-d)\in v(a-K)\setminus (v(a-K)\cap vK)$. If they were distinct, say $v(a-d)>v(a-c)$,
then as in the proof of 2) we would obtain that $v(a-c)\in vK$, contradiction.
\sn
5): This follows from the equalities $v(a+c-K)=v(a-K)$ and $v(da-K)=v(a-K)+vd$, which are straightforward to prove.
\sn
6): For all $c\in K$, from $v(a-b)>v(a-c)$ we obtain that $v(a-c)=v(b-c)$ as in the proof of 2); hence $v(a-c)\in
v(b-K)$ and $v(b-c)\in v(a-K)$, which shows that $v(a-K)=v(b-K)$.
\end{proof}

We will mostly work with immediate extensions, in which case we have that $v(a-K)\subseteq vK(a)=vK$ for every element $a\notin K$ in the extension.
However, several of our results hold more generally for extensions that are not necessarily immediate. Some of them, such as the following one, remain true when the set $v(a-K)$ is replaced by $v(a-K)\cap vK$.
\begin{lemma}                               \label{distinct}
Take a valued field extension $(K(a)|K,v)$. Take $\alpha\in v(a-K)$ and assume that $d\in K$ is such that
\begin{equation}               \label{alph+vd}
\alpha+vd\> >\> v(a-K)
\end{equation}
(note that $d$ always exists when $v(a-K)$ is bounded by some element from $vK$). Then the sets
\[
v(a-K)-vd^n\>, \;n\in\N\>,
\]
are pairwise distinct.
\end{lemma}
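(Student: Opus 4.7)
The plan is to argue by contradiction. Suppose two of the translates coincide, say $v(a-K)-vd^m = v(a-K)-vd^n$ for some $m<n$ in $\N$. Since $vd^n - vd^m = (n-m)vd$, rearranging this equality is equivalent to the invariance
\[
v(a-K) + (n-m)vd \>=\> v(a-K)\>,
\]
so the task reduces to showing that $v(a-K)$ cannot be invariant under any such nontrivial positive translation.

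To carry this out, I would first note that $vd > 0$: this is immediate from (\ref{alph+vd}), since $\alpha \in v(a-K)$ forces $\alpha + vd > \alpha$. Then I would apply the assumed invariance to $\alpha$ itself. The invariance would place $\alpha + (n-m)vd$ in $v(a-K)$; but because $n-m \geq 1$ and $vd > 0$, we have $(n-m)vd \geq vd$, hence
\[
\alpha + (n-m)vd \>\geq\> \alpha + vd \>>\> v(a-K)\>,
\]
where the strict inequality is (\ref{alph+vd}) once more. This contradicts the membership just deduced.

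I do not anticipate a genuine obstacle here; the argument is pure ordered-group bookkeeping and does not use any structural property of $v(a-K)$ beyond the fact that it contains $\alpha$ and is bounded above by $\alpha + vd$. The only point worth a moment's thought is the passage from ``two shifted copies of $v(a-K)$ agree'' to ``$v(a-K)$ is invariant under translation by $(n-m)vd$,'' but this is just the addition of the constant $vd^n$ to both sides. Note also that the proof uses the hypothesis $\alpha \in v(a-K)$ in an essential way, via the derivation $vd > 0$; without an actual element of $v(a-K)$ to translate, the argument would collapse.
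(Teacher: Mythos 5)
Your argument is correct and is essentially the paper's own proof in contrapositive form: both derive $vd>0$ from $\alpha+vd>\alpha$ and then compare a shift of $\alpha$ against the bound $\alpha+vd>v(a-K)$ (your witness $\alpha+(n-m)vd$ is the paper's witness $\alpha-mvd$ translated by $nvd$). No gaps.
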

\begin{proof}
Since $\alpha\in v(a-K)$ and $\alpha+vd>v(a-K)$, we know that $vd>0$. Take any natural numbers
$m<n$. Then
\[
\alpha-mvd\> \geq \> \alpha +vd-nvd \> >\> v(a-K)-nvd\>,
\]
which shows that $\alpha-mvd\notin v(a-K)-nvd=v(a-K)-vd^n$. But $\alpha-mvd\in
v(a-K)-mvd=v(a-K)-vd^m$, so the two sets are distinct.
\end{proof}

%
%
\subsection{Distances}                              \label{sectdist}
Take again a totally ordered set $(T,<)$.
A pair $(\Lambda^L,\Lambda^R)$  of subsets of $T$ is called a \bfind{cut} in $T$ if $\Lambda^L$ is an initial
segment of $T$ and $\Lambda^R=T\setminus \Lambda^L$; it then follows that $\Lambda^R$ is a final segment of $T$.
To compare cuts in  $(T,<)$ we will use the lower cut sets comparison. That is, for two cuts
$\Lambda_1=(\Lambda_1^L,\Lambda_1^R) ,\, \Lambda_2=(\Lambda_2^L,\Lambda_2^R)$ in $T$ we will write
$\Lambda_1<\Lambda_2$ if $\Lambda^L_1\varsubsetneq\Lambda^L_2$, and $\Lambda_1\leq \Lambda_2$ if
$\Lambda^L_1 \subseteq  \Lambda^L_2$. This defines a linear order on the set of all cuts in $T$.

For a given subset $S$ of $T$ we define $S^+$ to be the cut $(\Lambda^L,\Lambda^R)$ in $T$ such that
$\Lambda^L$ is the least initial segment containing $S$, that is,
\[
S^+\>:=\> (\{t\in T\,|\, \exists\, s\in S:\,t\leq s\}\,,\, \{t\in T\,|\, t>S\} )\>.
\]
Likewise, we denote by $S^-$ the cut $(\Lambda^L,\Lambda^R)$ in $T$ such that $\Lambda^L$ is the largest initial
segment disjoint from $S$, i.e.,
\[
S^-\>:=\> (\{t\in T\,|\, t<S\}\,,\, \{t\in T\,|\, \exists\, s\in S:\,t\geq s\} )\>.
\]
For $s\in T$, we set
\[
s^+\>:=\> \{s\}^+\quad\mbox{ and }\quad s^-\>:=\> \{s\}^-\>.
\]
We note that
\begin{equation}             \label{S+<s-}
S^+\>\leq\>s^- \>\Leftrightarrow\> S<s \;\mbox{ and }\;
S^+\><\>s^- \>\Leftrightarrow\> \exists\, t\in T:\, S\leq t<s\>.
\end{equation}
Indeed, $S^+\leq s^-$ means that the smallest initial segment of $T$ containing $S$ is a subset of the initial
segment $\{t\in T\,|\, t<s\}$, and this is equivalent to $S<s$. Likewise, $S^+< s^-$ means that the smallest
initial segment of $T$ containing $S$ is a proper
subset of $\{t\in T\,|\, t<s\}$, and as both are initial segments, this is equivalent to the existence of some
$t\in T$ such that $S\leq t$.
\pars
We embed $T$ in the linearly ordered set of all cuts in $T$ by identifying each $s\in T$ with $s^-$.

\pars
The set $v(a-K)\cap vK$ is an initial segment of $vK$ and thus the lower cut set of a cut in $vK$. However,
in order to be able to compare $v(a-K)$ with $v(a-L)$ when $L|K$ is algebraic, it is more convenient to work
with the cut $v(a-K)$ induces in the divisible hull $\widetilde{vK}$ of $vK$. Indeed,
$\widetilde{vK}$ is equal to the value group of the algebraic closure $K\ac$ of $K$, and if $L|K$ is algebraic,
then $K\ac=L\ac$ and therefore, $\widetilde{vK}=\widetilde{vL}$. Hence we define:
\[
\dist (a,K)\>:=\>(v(a-K)\cap \widetilde{vK})^+\;\; \textrm{ in the divisible hull $\widetilde{vK}$ of $vK\>$.}
\]
We call this cut the \bfind{distance of $z$ from $K$}. The distance replaces the use of suprema in the case of
non-archimedean value groups, which are not contained in $\R$.

\begin{lemma}                          \label{translate}
Let the situation be as above and assume that $v(a-K)\subseteq \widetilde{vK}$. Take some $\alpha\in
\widetilde{vK}$. Then $\dist (a,K)\leq\alpha^-$ is equivalent to $v(a-K)<\alpha$, and
$\dist (a,K)<\alpha^-$ is equivalent to $v(a-K)<\!\!|\, \alpha$.
\end{lemma}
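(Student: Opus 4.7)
The plan is to observe that this lemma is essentially a direct application of the equivalences \eqref{S+<s-} from the preceding discussion of cuts, specialized to the totally ordered set $T=\widetilde{vK}$.

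First I would unwind the definition of $\dist(a,K)$ under the hypothesis. Since we assume $v(a-K)\subseteq\widetilde{vK}$, we have $v(a-K)\cap\widetilde{vK}=v(a-K)$, and hence
\[
\dist(a,K)\>=\>v(a-K)^+
\]
as a cut in $\widetilde{vK}$. Thus the two equivalences become assertions about the comparison of the cuts $v(a-K)^+$ and $\alpha^-$ in $\widetilde{vK}$.

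Next I would apply \eqref{S+<s-} with $T=\widetilde{vK}$, $S=v(a-K)$ and $s=\alpha$. The first part of \eqref{S+<s-} gives
\[
\dist(a,K)\>=\>v(a-K)^+\>\leq\>\alpha^- \>\Longleftrightarrow\> v(a-K)\><\>\alpha,
\]
which is the first claimed equivalence. The second part of \eqref{S+<s-} gives
\[
\dist(a,K)\>=\>v(a-K)^+\><\>\alpha^- \>\Longleftrightarrow\>\exists\,\beta\in\widetilde{vK}:\,v(a-K)\leq\beta<\alpha,
\]
and the right-hand side is precisely the definition of $v(a-K)<\!\!|\,\alpha$ given in the introduction. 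This yields the second equivalence.

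There is no real obstacle here; the whole content is a bookkeeping translation between the notation $v(a-K)<\alpha$, $v(a-K)<\!\!|\,\alpha$ used in the mixed characteristic discussion and the cut-theoretic notation $\dist(a,K)$, $\alpha^-$ used in \cite{BKdrf}. The only minor point worth flagging in the write-up is that the hypothesis $v(a-K)\subseteq\widetilde{vK}$ is what lets us drop the intersection with $\widetilde{vK}$ in the definition of $\dist(a,K)$; without this one would only get a one-sided inequality between $(v(a-K)\cap\widetilde{vK})^+$ and $v(a-K)^+$.
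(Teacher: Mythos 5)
Your proof is correct and is essentially the paper's own argument: both reduce the lemma to the two equivalences in (\ref{S+<s-}) applied in $\widetilde{vK}$, using the hypothesis $v(a-K)\subseteq\widetilde{vK}$ to identify $v(a-K)\cap\widetilde{vK}$ with $v(a-K)$. Nothing further is needed.
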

\begin{proof}
We have:
\begin{eqnarray*}
\dist (a,K)\>\leq\>\alpha^-&\Leftrightarrow& v(a-K)\cap \widetilde{vK}<\alpha \\
&\Leftrightarrow& v(a-K)<\alpha\>,
\end{eqnarray*}
where the first equivalence holds by (\ref{S+<s-}) and the second equivalence holds since $v(a-K)\subseteq \widetilde{vK}$. Similarly,
\begin{eqnarray*}
\dist (a,K)\><\>\alpha^-&\Leftrightarrow& \exists\,\beta\in\widetilde{vK}:\, v(a-K)\cap
\widetilde{vK}\leq\beta<\alpha \\
&\Leftrightarrow& \exists\,\beta\in\widetilde{vK}:\, v(a-K)\leq\beta<\alpha
\>\Leftrightarrow\> v(a-K)<\!\!|\, \alpha\>.
\end{eqnarray*}
\end{proof}

\bn
%
%
\subsection{The fundamental equality and the defect}                              \label{sectfi}
Take an extension $(L|K,v)$ of valued fields such that the extension of $v$ from $K$ to $L$ is unique. Assume that
$\chara Kv=p>0$. Then
\begin{equation}                    \label{feuniq}
[L:K]\>=\> p^{\nu }\cdot(vL:vK)[Lv:Kv]\>,
\end{equation}
where by the Lemma of Ostrowski $\nu$ is a nonnegative integer (see \cite[Th\'eor\`eme 2, p.~236]{[R]}
or \cite[Corollary to Theorem 25, p.~78]{[ZS2]}). The factor
$d(L|K,v)=p^{\nu }$ is called the \bfind{defect} of the extension $(L|K,v)$.

\bn
%
%
\subsection{Criteria for defect extensions}                              \label{sectde}
%
%
\begin{lemma}                         \label{ueGp1}
Take an extension $(K(a)|K,v)$ of valued fields of degree
$p=\mbox{\rm char}(Kv)$ and such that the extension of $v$ from $K$ to $K(a)$ is unique.
If $v(a-K)$ has no maximal element, then $(K(a)|K,v)$ is immediate with defect~$p$.
\end{lemma}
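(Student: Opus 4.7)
The plan is to prove that $(K(a)|K,v)$ is immediate; the defect~$p$ conclusion then follows at once from the fundamental equality (\ref{feuniq}) because, with $[K(a):K]=p$ and both $(vK(a):vK)$ and $[K(a)v:Kv]$ equal to $1$, the remaining factor $p^\nu$ must equal $p$.

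Since $v(a-K)$ has no largest element, part 2 of Proposition \ref{propva-K} gives $v(a-K)\subseteq vK$. I would then pick a strictly increasing cofinal sequence $\alpha_\nu\in v(a-K)$ realized as $\alpha_\nu=v(a-c_\nu)$ for $c_\nu\in K$. Since $v(c_\nu-c_\mu)=\alpha_\mu$ for $\nu>\mu$, the sequence $(c_\nu)$ is pseudo-Cauchy in $(K,v)$; it admits $a$ as a pseudo-limit and has no pseudo-limit in $K$ (otherwise $v(a-K)$ would have a maximum).

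The core step is to verify that every $z\in K(a)^\times$ has $vz\in vK$ and, when $vz=0$, that $\bar z\in Kv$. Writing $z=g(a)$ with $g\in K[X]$ of degree less than $p$, the factorization
\[
g(a)-g(c_\nu)\>=\>(a-c_\nu)\,h(a,c_\nu),\qquad h(X,Y)\>:=\>\frac{g(X)-g(Y)}{X-Y}\in K[X,Y],
\]
satisfies $h(a,a)=g'(a)$. An induction on $\deg g$ applied to $g'$ (whose degree is strictly smaller, and which is nonzero at $a$ since $\deg g'<p$ while the minimal polynomial of $a$ has degree $p$) yields $vg'(a)\in vK$ and $vh(a,c_\nu)=vg'(a)$ for $\nu$ large. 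Consequently $v(g(a)-g(c_\nu))=\alpha_\nu+vg'(a)$ eventually exceeds $vg(a)$, forcing $vg(a)=vg(c_\nu)\in vK$, and likewise the residues agree when $vg(a)=0$.

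The main obstacle is to ensure that $\alpha_\nu$ exceeds the threshold $vg(a)-vg'(a)$; this is automatic if $v(a-K)$ is unbounded in $vK$ but requires argument otherwise. The cleanest way to handle the bounded case is to invoke Kaplansky's theorem on pseudo-Cauchy sequences of algebraic type: the identity $v(f(c_\nu))=p\,\alpha_\nu$ (where $f$ is the minimal polynomial of $a$; the identity follows from uniqueness of the extension of $v$ applied to the conjugates of $a-c_\nu$ in the separable case, and reduces to $(c_\nu-a)^p=c_\nu^p-a^p$ in characteristic~$p$ in the inseparable case $f(X)=X^p-a^p$) shows that $f$ witnesses $(c_\nu)$ being of algebraic type. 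Kaplansky's theorem then produces an immediate algebraic extension $K(b)|K$ of degree at most $p$ with $b$ a pseudo-limit of $(c_\nu)$, and a comparison of minimal polynomials forces $K(b)=K(a)$, giving the desired immediacy.
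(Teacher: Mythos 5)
Your reduction of the defect statement to immediacy via the fundamental equality (\ref{feuniq}) is correct, and your setup (a pseudo-Cauchy sequence $(c_\nu)$ in $K$ with $v(a-c_\nu)=\alpha_\nu$ cofinal in $v(a-K)$, no limit in $K$, and $v(f(c_\nu))=p\alpha_\nu$ via uniqueness of the extension) is sound. The genuine gap is in the last step. Kaplansky's theorem on sequences of algebraic type does not produce an immediate extension generated by a root of $f$; it produces one generated by a root of a polynomial $q$ of \emph{minimal} degree whose values along $(c_\nu)$ fail to stabilize. You only show that $f$ is one such polynomial, so $\deg q\leq p$, and ``a comparison of minimal polynomials'' does not force $K(b)=K(a)$. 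In the bounded case --- precisely the case you invoke Kaplansky to handle, and the only case of interest, since it is the defect case --- nothing you have said rules out $1<\deg q<p$ (this would require an algebraic element $b$ of degree $<p$ with $v(a-b)>v(a-K)$, and excluding it needs a real argument using the uniqueness hypothesis); and even if $\deg q=p$, there are many degree-$p$ irreducible polynomials with non-stabilizing values (e.g.\ $f+c$ whenever $vc>p\,v(a-K)$, which exists exactly when $v(a-K)$ is bounded), generating extensions other than $K(a)$. So the immediacy of $K(a)|K$ itself is not established.

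For comparison: the paper simply quotes \cite[part (1) of Theorem 2.21]{Ku6} for the immediacy and then applies (\ref{feuniq}). A short self-contained repair of your argument runs in the opposite direction: if $(K(a)|K,v)$ were \emph{not} immediate, then since $[K(a):K]=p$ is prime and the extension of $v$ is unique, (\ref{feuniq}) forces $(vK(a):vK)\,[K(a)v:Kv]=p$, i.e.\ the extension is defectless; the standard $ef$ products $b_ic_j$ (with $vb_i$ representatives of $vK(a)/vK$ and $c_jv$ a residue basis) then form a valuation basis $u_1=1,u_2,\dots,u_p$ of $K(a)|K$, and writing $a=\sum_i c_iu_i$ gives $v(a-c)\leq\min_{i\geq 2}v(c_iu_i)$ for all $c\in K$ with equality at $c=c_1$, so $v(a-K)$ has a maximal element --- contradicting the hypothesis. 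This uses the uniqueness assumption where it is actually needed and avoids Kaplansky's theorem entirely.
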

\begin{proof}
By \cite[part (1) of Theorem 2.21]{Ku6}, $(K(a)|K,v)$ is immediate. As the extension of $v$ from $K$ to $K(a)$
is assumed to be unique, it follows that the defect is equal to the degree of the extension.
\end{proof}

\pars
%

\begin{lemma}	                              	\label{c1}\label{c2}
Assume that $(K(a)|K,v)$ is a normal extension of prime degree $p$.
\sn
1) If $K^h$ is some henselization of $(K,v)$ and $a\notin K^h$, then the extension of $v$ from $K$ to $K(a)$ is
unique.
\sn
2) Assume that $(K,v)$ is of rank 1. If $v(a-K)$ has no maximal element and is bounded in $vK$, then $(K(a)|K,v)$
is immediate and has defect $p$.
\end{lemma}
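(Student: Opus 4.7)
My plan is to handle the two parts in order, with part~2) reducing to part~1) via the completion.

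For part~1), I would split on separability. A normal inseparable extension of prime degree $p$ must be purely inseparable, and a valuation extends uniquely to any purely inseparable extension (two extensions $w,w'$ have $\cO_w=\cO_{w'}$, since any element of one has a $p^n$-th power in $\cO_v\subseteq\cO_{w'}$ and is thus integral over it). So I may assume $K(a)|K$ is Galois of degree $p$. Primality of $[K(a):K]$ combined with $a\notin K^h$ forces $K(a)\cap K^h=K$, whence $[K(a)\cdot K^h:K^h]=p$. Since $K^h$ is henselian, $v$ extends uniquely from $K^h$ to $K^h(a)=K(a)\cdot K^h$. The number $r$ of extensions of $v$ to $K(a)$ then satisfies
\[
r\cdot [K(a)\cdot K^h:K^h]\>=\>[K(a):K]\>,
\]
via the standard decomposition $K(a)\otimes_K K^h\isom \prod_{w\mid v}K(a)_w^h$ together with the fact that, for Galois extensions, all the henselizations $K(a)_w^h$ are $K^h$-isomorphic, so of degree $[K(a)\cdot K^h:K^h]$. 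Thus $r\cdot p=p$, i.e.\ $r=1$.

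For part~2), the task is to arrange $a\notin K^h$ and then invoke part~1). The key input is that for a valued field of rank~$1$ the completion is henselian, and hence contains the henselization: $K^h\subseteq K^c$. An element $b\in K^c$ is the limit of a Cauchy sequence from $K$, equivalently $v(b-K)$ is cofinal in $vK$; for rank~$1$ this simply means unbounded in $vK$. Boundedness of $v(a-K)$ in $vK$ therefore rules out $a\in K^c$, so $a\notin K^h$. Part~1) then delivers uniqueness of the extension of $v$ to $K(a)$, and combined with the hypothesis that $v(a-K)$ has no maximal element, Lemma~\ref{ueGp1} yields that $(K(a)|K,v)$ is immediate with defect~$p$.

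The only nontrivial ingredient is part~1), where the main obstacle is invoking the formula $r\cdot[L\cdot K^h:K^h]=[L:K]$ for Galois $L|K$; everything else is either standard (henselizations of rank-$1$ fields embed into the completion) or already established in Lemma~\ref{ueGp1}. An alternative packaging of part~1) that bypasses the tensor product formula is to lift any $\sigma\in\Gal(K(a)|K)$ to $\widetilde\sigma\in\Gal(K(a)\cdot K^h|K^h)$ (possible since $K(a)\cap K^h=K$) and to use uniqueness of the extension to $K(a)\cdot K^h$ together with transitivity of the Galois action on extensions of $v$.
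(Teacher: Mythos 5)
Your proof is correct, and part 2) follows the paper's argument exactly (rank $1$ gives $K^h\subseteq K^c$, boundedness of $v(a-K)$ excludes $a$ from $K^c$, then part 1) plus Lemma~\ref{ueGp1}). Where you diverge is in how part 1) concludes uniqueness. The paper first shows that a normal extension of prime degree is linearly disjoint from every algebraic extension not containing it (treating the purely inseparable and Galois cases by a degree argument on $L(a)|L$), applies this to $L=K^h$, and then cites \cite[Lemma~2.1]{BK1}, which converts linear disjointness from the henselization into uniqueness of the extension of $v$. You instead make that last step explicit: in the purely inseparable case you observe directly that all extensions share the same valuation ring, and in the Galois case you use the decomposition $K(a)\otimes_K K^h\isom\prod_{w\mid v}K(a)^h_w$ together with the conjugacy of the $w$ under $\Gal(K(a)|K)$ to get $r\cdot[K(a)\cdot K^h:K^h]=[K(a):K]$, forcing $r=1$ once $K(a)\cap K^h=K$ gives $[K(a)\cdot K^h:K^h]=p$. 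The two routes rest on the same underlying fact; yours is more self-contained (it does not outsource the final implication to \cite{BK1}) at the cost of invoking the fundamental identity for henselizations, while the paper's version isolates the linear disjointness statement in a form that is reusable for arbitrary algebraic $L$. Your alternative packaging via lifting $\sigma$ to $\Gal(K(a)\cdot K^h|K^h)$ and using transitivity of the Galois action on the extensions of $v$ is also valid and avoids the tensor-product formula entirely.
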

\begin{proof}
1): Since $K(a)|K$ is normal and of degree $p$, it is linearly disjoint from every other algebraic extension $L$
over $K$ in which it is not contained. This is seen as follows. If $K(a)|K$ is inseparable, then it is purely
inseparable, and so is $L(a)|L$; hence the latter extension can only be of degree $1$ or $p$. If $K(a)|K$ is
separable, then it is Galois with a Galois group of order $p$. Then also $L(a)|L$ is Galois. As restriction embeds
its Galois group in that of $K(a)|K$, again the degree of $L(a)|L$ can only be $1$ or $p$.

Since $a\notin K^h$ by assumption, from what we have just shown we see that $K(a)$ must be linearly disjoint from $K^h$ over $K$, which by
\cite[Lemma~2.1]{BK1} implies our assertion.
\sn
2): Since $(K,v)$ is of rank 1, $K$ lies dense in its henselization $K^h$. By assumption, $v(a-K)$ is bounded in
$vK$, which implies that $a\notin K^h$ and thus by part 1), the extension of $v$ from $K$ to $K(a)$ is unique.
Now our assertion follows from Lemma~\ref{ueGp1}.
\end{proof}

\bn
%
%
\subsection{Artin-Schreier extensions of valued fields}                              \label{sectAS}
Throughout this section, we let $K$ be a field of characteristic $p>0$.
Recall that $L|K$ is called an \bfind{Artin-Schreier
extension} if it is generated by a root of a polynomial of the form $X^p-X-b$ with $b\in K$. Note that every
Artin-Schreier extension of $K$ is a Galois extension of degree $p$, and vice versa.
We call $\vartheta$ an \bfind{Artin-Schreier generator} of an Artin--Schreier extension $L|K$ if $L=K(\vartheta)$
with $\vartheta^p-\vartheta\in K$.
\begin{lemma}                               \label{allASg}
Assume that $\vartheta$ is an Artin-Schreier generator of an Artin--Schreier extension $L|K$.
Then $\vartheta'$ is another Artin-Schreier generator
of $L|K$ if and only if $\vartheta' =i\vartheta +c$ for some $i\in\F_p$ and $c\in K$.
\end{lemma}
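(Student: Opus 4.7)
The plan is to use the Galois group action on Artin--Schreier extensions, using the standard fact that the roots of an Artin--Schreier polynomial $X^p-X-b$ differ from any single root by elements of $\F_p$.

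For the nontrivial direction, assume $\vartheta'$ is another Artin--Schreier generator of $L|K$, and let $\sigma$ be a generator of $\Gal(L|K)$. Since $\vartheta,\vartheta+1,\ldots,\vartheta+p-1$ are precisely the roots of $X^p-X-(\vartheta^p-\vartheta)\in K[X]$, and likewise for $\vartheta'$, I would observe that $\sigma\vartheta=\vartheta+j$ and $\sigma\vartheta'=\vartheta'+j'$ for some $j,j'\in\F_p^\times$ (both are nonzero because $\vartheta,\vartheta'\notin K$ and $\sigma$ is nontrivial). Setting $i:=j'j^{-1}\in\F_p^\times$, a direct computation gives $\sigma(\vartheta'-i\vartheta)=\vartheta'+j'-i\vartheta-ij=\vartheta'-i\vartheta$, so $\vartheta'-i\vartheta$ is fixed by $\sigma$ and hence lies in $K^{\langle\sigma\rangle}=K$. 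Putting $c:=\vartheta'-i\vartheta\in K$ yields the required expression.

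For the converse, suppose $\vartheta'=i\vartheta+c$ with $i\in\F_p$ and $c\in K$. Using $i^p=i$ in $\F_p$, I compute
\[
(\vartheta')^p-\vartheta'\>=\>(i\vartheta+c)^p-(i\vartheta+c)\>=\>i^p\vartheta^p+c^p-i\vartheta-c\>=\>i(\vartheta^p-\vartheta)+(c^p-c)\>\in\>K,
\]
so $\vartheta'$ satisfies an Artin--Schreier equation over $K$. To confirm that it generates $L|K$, one needs $\vartheta'\notin K$, which forces $i\neq 0$; then $\vartheta=i^{-1}(\vartheta'-c)\in K(\vartheta')$, whence $K(\vartheta')=K(\vartheta)=L$. (The case $i=0$ in the statement is vacuous since then $\vartheta'=c\in K$ is not a generator, so the ``if'' direction is really about $i\in\F_p^\times$.)

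I do not anticipate a genuine obstacle here. The only small subtlety is choosing $i$ so that $\vartheta'-i\vartheta$ becomes Galois-invariant; everything else reduces to the Frobenius-linearity of the map $x\mapsto x^p-x$ on $\F_p$-linear combinations and to the fact that an Artin--Schreier polynomial splits over $K$ into translates by $\F_p$.
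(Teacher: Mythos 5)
Your proof is correct and follows essentially the same route as the paper: both arguments fix a generator $\sigma$ of $\Gal(L|K)$, note that $\sigma\vartheta-\vartheta=j$ and $\sigma\vartheta'-\vartheta'=j'$ lie in $\F_p^\times$, choose $i=j'j^{-1}$ so that $\vartheta'-i\vartheta$ is $\sigma$-invariant and hence in $K$, and verify the converse by the computation $(i\vartheta+c)^p-(i\vartheta+c)=i(\vartheta^p-\vartheta)+c^p-c\in K$. Your remark that the ``if'' direction is genuinely about $i\in\F_p^\times$ is accurate and matches the paper's treatment, which also only handles nonzero $i$ in the converse.
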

\begin{proof}
If $\vartheta$ and $\vartheta'$ are roots of the same polynomial $X^p-X-b$,
then $\vartheta-\vartheta'$ is a root of $X^p-X$, whose roots are exactly the elements of
$\F_p\,$. Hence, $\vartheta+i$, $i\in\F_p\,$, are all roots of $X^p-X-b$. Pick a nontrivial $\sigma\in
\Gal L|K$. We then have that $\sigma\vartheta-\vartheta=j$ for some $j\in\F_p^\times$.

If $\vartheta'$ is another Artin-Schreier generator of $L|K$ such that $\sigma\vartheta-\vartheta=j=
\sigma\vartheta'-\vartheta'$, then we have $\sigma(\vartheta-\vartheta')
=\vartheta-\vartheta'$. Since $\sigma$ is a generator of $\Gal L|K\isom
\Z/p\Z$, it follows that $\tau(\vartheta-\vartheta')=\vartheta-
\vartheta'$ for all $\tau\in\Gal L|K$, that is, $\vartheta-\vartheta'\in K$.

If $\vartheta'$ is another Artin-Schreier generator of $L|K$ and
$\sigma\vartheta'-\vartheta'=j'\in\F_p^\times$, then there is some $i\in\F_p^\times$
such that $ij=j'$. Since $\sigma i=i$, we then have that $\sigma i\vartheta-i\vartheta=i(\sigma\vartheta-\vartheta)
=ij=j'$. Then by what we have shown before, $\vartheta'=i\vartheta+c$ for some $c\in K$.

Conversely, if $\vartheta$ is an Artin-Schreier generator of $L|K$ and
if $i\in\F_p^\times$ and $c\in K$, then $(i\vartheta+c)^p-(i
\vartheta+c)=i(\vartheta^p-\vartheta)+c^p-c\in K$. But $i\vartheta+c$
cannot lie in $K$, so $K(i\vartheta+c)=L$ since $[L:K]$ is a prime. This
shows that also $i\vartheta+c$ is an Artin-Schreier generator of $L|K$.
\end{proof}

\begin{corollary}                           \label{ASinv}
Let $(L|K,v)$ be an Artin-Schreier extension of valued fields. Then
$v(\vartheta-K)$ is independent of the choice of the Artin-Schreier generator
$\vartheta$ of the extension, so it is an invariant of the extension.
\end{corollary}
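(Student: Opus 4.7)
The plan is to combine Lemma~\ref{allASg} with part 5) of Proposition~\ref{propva-K}. By Lemma~\ref{allASg}, any Artin-Schreier generator $\vartheta'$ of $L|K$ has the form $\vartheta' = i\vartheta + c$ for some $i\in\F_p$ and $c\in K$. Since $\vartheta'\notin K$, we cannot have $i=0$, so $i\in\F_p^\times$.

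Applying part 5) of Proposition~\ref{propva-K} with $a=\vartheta$ and $d=i$ gives
\[
v(\vartheta'-K)\>=\> v(i\vartheta+c-K)\>=\> v(\vartheta-K)+vi\>.
\]
It then suffices to observe that $vi=0$. This holds because $i$ is a nonzero element of the prime field $\F_p\subseteq K$, so both $i$ and $i^{-1}$ lie in $\F_p\subseteq K$; therefore $vi\geq 0$ and $vi^{-1}\geq 0$, forcing $vi=0$. Consequently $v(\vartheta'-K)=v(\vartheta-K)$, which is the desired invariance.

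There is no real obstacle here; the corollary is an immediate consequence of the parametrization of Artin-Schreier generators in Lemma~\ref{allASg} together with the translation and scaling identity recorded in Proposition~\ref{propva-K}(5), once one notes that elements of the prime field have value zero.
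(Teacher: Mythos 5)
Your proposal is correct and follows essentially the same route as the paper's own proof: parametrize the generators via Lemma~\ref{allASg}, apply part 5) of Proposition~\ref{propva-K}, and use $vi=0$ for $i\in\F_p^\times$. The only (harmless) addition is your explicit justification that nonzero elements of the prime field have value zero.
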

\begin{proof}
Take two Artin-Schreier generators $\vartheta,\vartheta'$ of $L|K$. By Lemma~\ref{allASg} we can write
$\vartheta' =i\vartheta +c$ for some $i\in\F_p^\times$ and $c\in K$. Since $vi=0$ for every valuation, it follows
from assertion 5) of Proposition~\ref{propva-K} that $v(\vartheta'-K)=v(\vartheta-K)$.
\end{proof}

%

\begin{proposition}                             \label{propve-th}
Assume that $(K(\eta)|K,v)$ is a purely inseparable extension and $(K(\vartheta)|K,v)$ is an
Artin-Schreier extension of valued fields. Then $K(\eta,\vartheta)|K(\vartheta)$ is purely inseparable
and so the extension of $v$ from $K(\vartheta)$ to $K(\eta,\vartheta)$ is unique. If
\begin{equation}                                   \label{ve-th}
v(\eta-\vartheta)\> > \> v(\vartheta-K)\>,
\end{equation}
then the extension of $v$ from $K$ to $K(\vartheta)$ is unique. If in addition, $(K(\eta)|K,v)$ is immediate,
then $(K(\vartheta)|K,v)$ is immediate with defect~$p$.
\end{proposition}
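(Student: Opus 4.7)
The plan is to prove the three assertions in order. The first is immediate: since $\eta^{p^m} \in K \subseteq K(\vartheta)$ for some $m$, the element $\eta$ is purely inseparable over $K(\vartheta)$, so $K(\eta,\vartheta)|K(\vartheta)$ is a finite purely inseparable extension and therefore carries a unique extension of the valuation.

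The main work is in the second assertion. I will apply Lemma~\ref{c1}(1): since $K(\vartheta)|K$ is normal of prime degree, uniqueness reduces to showing $\vartheta \notin K^h$ for some (equivalently any) henselization $K^h$ of $(K,v)$. The key algebraic input is the characteristic-$p$ identity
\[
(\eta - \vartheta)^{p^n} \;=\; \eta^{p^n} - \vartheta^{p^n} \;=\; a' - b_n - \vartheta,
\]
where $\eta^{p^n} =: a' \in K$, $b := \vartheta^p - \vartheta$, and $b_n := b + b^p + \cdots + b^{p^{n-1}} \in K$, so that $\vartheta^{p^n} = \vartheta + b_n$. Setting $c := a' - b_n \in K$, this yields $v(\vartheta - c) = p^n\, v(\eta-\vartheta)$, placing $p^n\, v(\eta-\vartheta)$ inside $v(\vartheta - K)$. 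Combined with the hypothesis $v(\eta-\vartheta) > v(\vartheta-K)$ this forces $v(\eta-\vartheta) > p^n\, v(\eta-\vartheta)$, and hence $v(\eta-\vartheta) < 0$ in the divisible hull of $vK$.

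The hard step, which I expect to be the main obstacle, is to derive a contradiction from the assumption $\vartheta \in K^h$. Granting that assumption, $(K(\vartheta)|K,v)$ becomes immediate, so by Proposition~\ref{propva-K}(1) the set $v(\vartheta - K)$ has no maximum, and a pseudo-Cauchy sequence $(c_\lambda) \subset K$ with pseudo-limit $\vartheta$ exists. From $v(\eta-\vartheta) > v(\vartheta - c_\lambda)$ and the ultrametric inequality one gets $v(\eta - c_\lambda) = v(\vartheta - c_\lambda)$, so $\eta$ is also a pseudo-limit of $(c_\lambda)$. The contradiction should come from the tension between this and the purely inseparable structure of $\eta$: the minimal polynomial of the algebraic-type sequence $(c_\lambda)$ is the Artin--Schreier polynomial $X^p - X - b$ of $\vartheta$, and a careful Kaplansky-type argument should force any such pseudo-limit to be a root, which combined with $\eta^{p^n} = a' \in K$ would give $\eta \in K$, contradicting $\eta \notin K$.

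Finally, the third assertion falls out easily. The ultrametric identity $v(\eta - c) = v(\vartheta - c)$ for every $c \in K$ (holding because $v(\eta-\vartheta) > v(\vartheta-c)$) yields $v(\vartheta - K) = v(\eta - K)$. The immediate hypothesis on $(K(\eta)|K,v)$ together with Proposition~\ref{propva-K}(1) gives that $v(\eta - K)$ has no maximum, so neither does $v(\vartheta - K)$. Combined with the uniqueness from the second step, Lemma~\ref{ueGp1} delivers that $(K(\vartheta)|K,v)$ is immediate with defect $p$.
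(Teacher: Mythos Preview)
Your treatment of the first and third assertions is correct and essentially identical to the paper's: pure inseparability of $K(\eta,\vartheta)|K(\vartheta)$ is clear, and for the third assertion both you and the paper use part~6) of Proposition~\ref{propva-K} to get $v(\vartheta-K)=v(\eta-K)$, then part~1) for the absence of a maximum, then Lemma~\ref{ueGp1}.

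The genuine gap is in the second assertion. You correctly reduce to showing $\vartheta\notin K^h$ via Lemma~\ref{c1}(1), which is exactly the paper's strategy. But your proposed contradiction is not valid as stated. The claim that ``a careful Kaplansky-type argument should force any such pseudo-limit to be a root'' of $X^p-X-b$ is false in general: Kaplansky's theory guarantees that an algebraic-type pseudo-Cauchy sequence has \emph{some} root of its minimal polynomial as a pseudo-limit, not that \emph{every} pseudo-limit is a root. Without additional hypotheses (such as Kaplansky's hypothesis~A), distinct elements generating non-isomorphic immediate extensions can be pseudo-limits of the same sequence. So from $v(\eta-\vartheta)>v(\vartheta-K)$ and $\vartheta\in K^h$ you cannot conclude that $\eta$ satisfies the Artin--Schreier equation. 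Your preliminary computation showing $p^n v(\eta-\vartheta)\in v(\vartheta-K)$ and hence $v(\eta-\vartheta)<0$ is correct but never gets used; it does not close the gap.

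The paper handles this step by a direct citation: it invokes Theorem~2 of \cite{Ku4}, which (in the form used here) asserts that if $\eta$ is purely inseparable over $K$ and some separable element $\vartheta$ satisfies $v(\eta-\vartheta)>v(\vartheta-K)$, then $\vartheta\notin K^h$. This is a nontrivial approximation result about henselizations whose proof is not reproduced in the present paper; it is precisely the missing ingredient in your argument. If you want a self-contained proof, you would need to reprove that theorem (or at least the special case needed here), and the Kaplansky pseudo-limit heuristic you sketch is not sufficient for that.
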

\begin{proof}
Assume that (\ref{ve-th}) holds. Then $\vartheta\notin K^h$ since otherwise it would follow
from Theorem~2 of~\cite{Ku4}
that $K(\eta)|K$ is not purely inseparable, contradicting our assumption. Now our first assertion follows from
part 1) of Lemma~\ref{c1}. By part 6) of Proposition~\ref{propva-K}, (\ref{ve-th}) also implies
that $v(\vartheta-K)=v(\eta-K)$.
\pars
Now assume in addition that $(K(\eta)|K,v)$ is immediate. Then by part 1) of Proposition~\ref{propva-K},
$v(\vartheta-K)=v(\eta-K)$ has no maximal element.  Therefore, our second assertion follows from Lemma~\ref{ueGp1}.
\end{proof}

%
%
\subsection{Kummer extensions of prime degree of valued fields}                            \label{sectK}
Take a valued field $(K,v)$ of mixed characteristic, that is, $\chara K=0$ while $\chara Kv=p>0$. We consider
Kummer extensions of degree $p$. Such an extension is generated by an element $\eta$ such that $\eta^p\in K$.
If $(K(\eta)|K,v)$ is immediate, then it can be assumed that $\eta$ and hence also $\eta^p$ is a $1$-unit, i.e.,
$v(\eta-1)>0$. Indeed, since $(K(\eta)|K,v)$ is immediate, we have that $v\eta\in vK(\eta)=vK$,
so there is $c\in K$ such that $vc=-v\eta$. Then $v\eta c=0$, and since $\eta cv\in K(\eta)v=Kv$, there is
$d\in K$ such that $dv=(\eta cv)^{-1}$. Then $v(\eta cd)=0$ and $(\eta cd)v=1$. Hence $\eta cd$ is a $1$-unit.
Furthermore, $K(\eta cd)=K(\eta)$ and $(\eta cd)^p=\eta^pc^pd^p\in K$. Thus we can replace $\eta$ by $\eta cd$
and assume from the start that $\eta$ is a $1$-unit.

\pars
The next proposition follows from \cite[Corollary~3.6 and Proposition 3.7] {BKdrf}:
\begin{proposition}                                       \label{dist_approx}
Take a valued field of mixed characteristic and a Kummer defect extension as detailed above.
Then the distance $\dist(\eta,K)$ does not depend on the choice of the generator $\eta$
of the extension $(K(\eta)|K,v)$ as long as it is a $1$-unit and satisfies $\eta^p\in K$. Moreover,
\begin{equation}                                  \label{disteta1}
0\><\>\dist(\eta,K) \>\leq\> \left(\frac{vp}{p-1}\right)^-\>.
\end{equation}
\end{proposition}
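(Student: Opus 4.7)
The plan is to prove three pieces: the lower bound, the upper bound, and invariance under the choice of 1-unit generator.

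The lower bound $0 < \dist(\eta,K)$ is immediate. Since $\eta$ is a 1-unit, $v(\eta - 1) > 0$; by immediacy of $(K(\eta)|K,v)$ we have $v(\eta - K) \subseteq vK \subseteq \widetilde{vK}$, so $v(\eta - 1)$ is a strictly positive element in the lower cut set of the cut $\dist(\eta,K)$.

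For the upper bound, by Lemma~\ref{translate} it suffices to verify (\ref{Kde}), namely $v(\eta - K) < vp/(p-1)$. The Galois conjugates of $\eta$ are $\zeta^j\eta$ for $\zeta \in K$ a primitive $p$-th root of unity. From $p = \prod_{j=1}^{p-1}(1 - \zeta^j)$, together with the fact that each factor has the same valuation (since $v(\zeta^{j-1} + \cdots + 1) = 0$: this sum has residue $j \not\equiv 0 \pmod p$, as $\zeta v = 1$ in characteristic $p$), one obtains $v(1 - \zeta^j) = vp/(p-1)$, so $v(\eta - \zeta^j\eta) = vp/(p-1)$ for $j \not\equiv 0$. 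If $v(\eta - c) > vp/(p-1)$ for some $c \in K$, Krasner's lemma places $\eta$ in $K^h(c) = K^h$; combining $\eta \in K^h$ with $\eta^p \in K$ via a Hensel argument for $Y^p = \eta^p$ starting from $Y = c$ would produce a $p$-th root of $\eta^p$ in $K$, forcing $\eta \in K$ and contradicting nontriviality of the defect extension. The case $v(\eta - c) = vp/(p-1)$ reduces to the previous by Proposition~\ref{propva-K}(1): since $v(\eta - K)$ has no maximum, we find $c' \in K$ with $v(\eta - c') > v(\eta - c) = vp/(p-1)$.

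For independence of the generator: given two 1-unit generators $\eta, \eta'$ with $p$-th powers in $K$, Kummer theory gives $\eta' = c\eta^i$ with $c \in K^\times$ and $i \in \{1,\ldots,p-1\}$. Indeed, if $\sigma\eta = \zeta\eta$ and $\sigma\eta' = \zeta^i\eta'$ then $\eta'/\eta^i$ is $\sigma$-fixed, hence in $K$; the 1-unit hypotheses force $c$ itself to be a 1-unit. By Proposition~\ref{propva-K}(5), $v(\eta' - K) = v(\eta^i - K)$, so it remains to show $v(\eta^i - K) = v(\eta - K)$. For this I would use the invariant $\Sigma_\cE$ recalled in the introduction, which is independent of the choice of generator of $\Gal(L|K)$: computing $\sigma f - f = (\zeta - 1)\eta$ for $f = \eta - c$ gives $v((\sigma f - f)/f) = vp/(p-1) - v(\eta - c) \in \Sigma_\cE$, and the analogous computation for $f = \eta^i - c$ (using $\sigma f - f = (\zeta^i - 1)\eta^i$ which also has value $vp/(p-1)$) yields $vp/(p-1) - v(\eta^i - c) \in \Sigma_\cE$. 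The main obstacle is the reverse direction: showing that every element of $\Sigma_\cE$ arises from some such $f$, so that $\{vp/(p-1) - v(\eta - c) : c \in K\} = \Sigma_\cE = \{vp/(p-1) - v(\eta^i - c) : c \in K\}$. This is the content of \cite[Corollary 3.6 and Proposition 3.7]{BKdrf} and requires careful structural analysis of how polynomials in $\eta$ behave under $\sigma$; once established, $v(\eta - K) = v(\eta^i - K)$, completing the proof of independence.
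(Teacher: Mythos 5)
The first thing to note is that the paper does not prove this proposition at all: it is imported verbatim from \cite[Corollary~3.6 and Proposition~3.7]{BKdrf}, so your attempt is doing more work than the text itself, and there is no in-paper argument to compare against. Your proof of the lower bound is correct. For the upper bound, the Krasner strategy and the computation $v(1-\zeta^j)=\frac{vp}{p-1}$ are fine, as is the reduction of the boundary case via part 1) of Proposition~\ref{propva-K}; the genuine error is in how you extract the contradiction. Hensel's lemma applied to $Y^p-\eta^p$ with approximate root $c$ produces a root in $K^h$, not in $K$ --- $K$ is not assumed henselian --- so you cannot conclude $\eta\in K$. What $\eta\in K^h$ actually contradicts is the \emph{uniqueness of the extension of the valuation}: a defect extension has a unique extension of $v$ from $K$ to $K(\eta)$, which by \cite[Lemma~2.1]{BK1} (exactly as used in the proof of Lemma~\ref{c1}) forces $K(\eta)$ to be linearly disjoint from $K^h$ over $K$; this is impossible if $K(\eta)\subseteq K^h$ and $[K(\eta):K]=p$. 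Equivalently, a Galois subextension of $K^h|K$ of degree $p$ admits $p$ distinct extensions of $v$. With that repair the upper bound goes through.

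For the independence of the generator, your reduction via Kummer theory to $\eta'=c\eta^i$ with $c$ a $1$-unit, and then to the claim $v(\eta^i-K)=v(\eta-K)$ via part 5) of Proposition~\ref{propva-K}, is correct; but you then openly defer the remaining claim to \cite{BKdrf}, so as a standalone proof this is a gap (even if it mirrors the paper's own citation). The detour through $\Sigma_\cE$ is not needed: for $1$-units $\mu$ and $c$ with $v(\mu-c)>0$ one has $\mu^i-c^i=(\mu-c)\sum_{j=0}^{i-1}\mu^jc^{i-1-j}$, and the sum has residue $i\ne 0$ in $Kv$, hence value $0$, so $v(\eta^i-c^i)=v(\eta-c)$. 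Since the positive elements of $v(\eta-K)$ are cofinal in it (it contains $v(\eta-1)>0$ and has no maximum) and both $v(\eta-K)$ and $v(\eta^i-K)$ are initial segments of $vK$ by immediacy, this yields $v(\eta-K)\subseteq v(\eta^i-K)$; the reverse inclusion follows by applying the same argument to $\eta^i$ with exponent $i'$ where $ii'\equiv 1 \pmod p$, using $\eta^{ii'}=\eta\,(\eta^p)^k$ with $\eta^p\in K$ and part 5) of Proposition~\ref{propva-K}. Since the extension is immediate, $v(\eta-K)\subseteq vK$ and $\dist(\eta,K)=v(\eta-K)^+$, so equality of the sets gives equality of the distances.
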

Under the above conditions, the Kummer extension is immediate, hence we have that $v(\eta-K)\subseteq vK$. From the
definition of the distance it then follows that $v(\eta-K)$ is the intersection of the lower cut set of
$\dist(\eta,K)$ with $vK$, hence the distance determines uniquely the set $v(\eta-K)$, showing that this set does
not depend on the choice of the generator $\eta$. Moreover, from Lemma~\ref{translate} we obtain that 
inequality (\ref{Kde}) holds.

\parm

In \cite[Proposition 3.7]{BKdrf} we show that a Kummer extension $(K(\eta)|K,v)$, where $\eta$ is a $1$-unit with 
$\eta^p\in K$, has dependent defect if and only if
\[
\dist(a,K) \>=\> \frac{vp}{p-1} \,+\, H^-\>,
\]
for some proper convex subgroup $H$ of $\widetilde{vK}$. If $(K,v)$ has rank 1, then $\widetilde{vK}$ is 
archimedean ordered and therefore $H=\{0\}$. In this case, the above equation just becomes 
\[
\dist(\eta,K)\><\>\left(\frac{vp}{p-1}\right)^-\>.
\]
In view of Lemma~\ref{translate}, this translates to $v(\eta-K)<\!\!|\, \frac{vp}{p-1}$, as mentioned in the 
Introduction.

\bn
%
%
\subsection{Differences of $p$-th powers in mixed characteristic}                              \label{sectdpp}
\begin{lemma}                           \label{lvep-ap}
Take $\eta$ and $a$ to be two elements of some valued field of characteristic 0 with residue field characteristic $p>0$. If
\begin{equation}                                \label{ve-a}
v(\eta-a)\><\> \frac{vp}{p-1}+v\eta\>,
\end{equation}
then
\begin{equation}                           \label{vep-ap}
v(\eta^p-a^p) \>=\> pv(\eta-a)\>.
\end{equation}
\end{lemma}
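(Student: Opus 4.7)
The plan is to reduce to computing the valuation of $1-(1-\epsilon)^p$ where $\epsilon=(\eta-a)/\eta$. First assume $\eta\neq 0$ (the case $\eta=0$ is handled separately: if $v\eta=\infty$, the hypothesis forces $a\neq 0$ and the claim collapses to $v(-a^p)=pva$). Set $\epsilon := (\eta-a)/\eta$, so that $a=\eta(1-\epsilon)$ and
\[
v\epsilon \>=\> v(\eta-a)-v\eta \><\> \frac{vp}{p-1}
\]
by hypothesis (\ref{ve-a}). Then
\[
\eta^p-a^p \>=\> \eta^p\bigl(1-(1-\epsilon)^p\bigr)\>,
\]
so it suffices to show $v(1-(1-\epsilon)^p) = pv\epsilon$, since then $v(\eta^p-a^p)=pv\eta+pv\epsilon=pv(\eta-a)$.

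Next I would expand via the binomial theorem:
\[
1-(1-\epsilon)^p \>=\> \sum_{k=1}^{p}\binom{p}{k}(-1)^{k+1}\epsilon^k\>.
\]
The last term $k=p$ has valuation exactly $pv\epsilon$. For $1\le k\le p-1$, the standard fact that $p\mid\binom{p}{k}$ gives $v\bigl(\binom{p}{k}\epsilon^k\bigr)\ge vp+kv\epsilon$. The key inequality to verify is then $vp+kv\epsilon > pv\epsilon$, i.e.,
\[
vp \>>\> (p-k)v\epsilon\>,
\]
for $1\le k\le p-1$. Since $(p-k)v\epsilon\le (p-1)v\epsilon < vp$ by our bound on $v\epsilon$, this is immediate.

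Hence in the binomial sum the unique term of minimum valuation is $(-1)^{p+1}\epsilon^p$ (with valuation $pv\epsilon$) and every other term has strictly larger valuation, so by the ultrametric inequality $v(1-(1-\epsilon)^p) = pv\epsilon$, as required. There is no genuine obstacle here: the proof is a single application of the ultrametric triangle inequality to the binomial expansion, with the hypothesis (\ref{ve-a}) tailored precisely so that the middle binomial terms, each carrying an extra factor of $p$, are dominated by the pure $\epsilon^p$ term.
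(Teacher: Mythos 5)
Your proof is correct in substance but takes a genuinely different route from the paper. The paper adjoins a primitive $p$-th root of unity $\zeta$, factors $\eta^p-a^p=\prod_{i=1}^p(\zeta^i\eta-a)$, and uses the standard fact $v(\zeta^i-1)=\frac{vp}{p-1}$ together with hypothesis (\ref{ve-a}) to conclude that every factor has value $v(\eta-a)$. Your argument instead normalizes by $\eta$ and applies the ultrametric inequality to the binomial expansion of $1-(1-\epsilon)^p$, exploiting $p\mid\binom{p}{k}$ for $1\le k\le p-1$. The binomial route is more elementary: it avoids passing to the extension $K(\zeta)$ (and the need to extend $v$ to it), at the modest cost of a case distinction for $\eta=0$, which the factorization argument also implicitly requires.

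One small repair is needed in your justification of the key inequality. You deduce $vp>(p-k)v\epsilon$ from ``$(p-k)v\epsilon\le(p-1)v\epsilon<vp$'', but the first inequality only holds when $v\epsilon\ge 0$; the hypothesis does not force this (e.g.\ $\eta=p$, $a=1$ gives $v\epsilon=-vp<0$), and for $v\epsilon<0$ and $k\ge 2$ one has $(p-k)v\epsilon>(p-1)v\epsilon$. The conclusion survives, because when $v\epsilon<0$ one has $(p-k)v\epsilon<0<vp$ directly, so a two-case remark closes the gap. With that fix the proof is complete.
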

\begin{proof}
Let $\zeta$ be a primitive $p$-th root of unity. We have that
\[
\eta^p-a^p \>=\>\prod_{i=1}^p(\zeta^i\eta-a)\>=\>\prod_{i=1}^p(\zeta^i\eta-\eta+\eta-a)\>.
\]
Using the well known fact that
\[
v(\zeta^i-1)\>=\>\frac{vp}{p-1}
\]
(see e.g.\ the proof of Lemma~2.9 of~\cite{Ku8}) and our assumption, we obtain that
\[
v(\zeta^i-1)+v\eta\>=\> \frac{vp}{p-1}+v\eta \> >\> v(\eta-a)\>.
\]
Hence we have that
\[
v(\zeta^i\eta-a)\>=\> v(\zeta^i\eta-\eta+\eta-a)\>=\>\min\{v(\zeta^i-1)+v\eta, v(\eta-a)\}\>=\>v(\eta-a)\>,
\]
which yields equation (\ref{vep-ap}).
\end{proof}

\bn
%
%
%
\section{The case of valued fields of characteristic $p>0$}                              \label{sectcharp}
%

%
%
\subsection{A basic transformation}                              \label{secttransf}
We describe a transformation that was introduced in \cite{Ku6}. Take a valued field $(K,v)$ of characteristic
$p>0$ and a (not necessarily immediate) purely inseparable extension $(K(\eta)|K,v)$ of degree $p$ of valued
fields such that $\eta^p\in K$ and that $v(\eta-K)$ is bounded from above by an element in $vK(\eta)$ (and hence
also by an element in $vK$). We are starting with the minimal polynomial $Y^p-\eta^p$ of $\eta$ over $K$ and turn
it into the separable polynomial
\begin{equation}                         \label{trans1}
Y^p-d^{p-1}Y-\eta^p\>,
\end{equation}
where $d\ne 0$. Setting $Y=dX$ and then dividing the resulting polynomial by $d^p$, we transform this polynomial
into the Artin-Schreier polynomial
\begin{equation}                         \label{trans2}
X^p-X-\frac{\eta^p}{d^p}\>.
\end{equation}
Under the condition that $vd$ is large enough, the following lemma describes the behaviour of the set $v(\eta-K)$
when $\eta$ is replaced by roots of these two polynomials.
\begin{lemma}                               \label{dXeqc}
Take $d\in K$ such that
\begin{equation}              \label{vd}
vd^{p-1}\> >\> pv(\eta-K) -v\eta\>.
\end{equation}
Let $\tilde\vartheta_d$ be a root of the polynomial (\ref{trans1}). Then
\[
\vartheta_d\>:=\> \frac{\tilde\vartheta_d}{d}
\]
is a root of (\ref{trans2}) and
$K(\vartheta_d)|K$ is an Artin-Schreier extension with a unique extension of $v$ from $K$ to
$K(\vartheta_d)$. Furthermore, we have that
\begin{equation}                            \label{dXeqceq1}
v(\tilde\vartheta_d-K)\>=\>v(\eta-K)\>,
\end{equation}
\begin{equation}                            \label{dXeqceq2}
v\left(\frac{\eta}{d}-\vartheta_d\right)\> >\> v(\vartheta_d-K)\>,
\end{equation}
and
\begin{equation}                   \label{vth-K}
v(\vartheta_d-K)\>=\> v(\eta-K)-vd\>.
\end{equation}
\end{lemma}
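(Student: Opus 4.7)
My plan is to work directly with the Artin--Schreier--type equation satisfied by $\tilde\vartheta_d$, extract the value of $\tilde\vartheta_d$, and then push everything to $\vartheta_d$ via the scaling $\vartheta_d=\tilde\vartheta_d/d$.

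First I would pin down $v\tilde\vartheta_d$. Since $\tilde\vartheta_d^p - d^{p-1}\tilde\vartheta_d - \eta^p = 0$, the ultrametric principle forces two of the three valuations $pv\tilde\vartheta_d$, $(p-1)vd+v\tilde\vartheta_d$, $pv\eta$ to coincide and be minimal. Note that $v\eta=v(\eta-0)\in v(\eta-K)$, so assumption (\ref{vd}) with $\alpha=v\eta$ yields $(p-1)vd>(p-1)v\eta$, i.e. $vd>v\eta$. A short case analysis then rules out the cases $v\tilde\vartheta_d=vd$ and $(p-1)vd+v\tilde\vartheta_d=pv\eta$, leaving $v\tilde\vartheta_d=v\eta$.

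Next I would use characteristic $p$: from $\tilde\vartheta_d^p - \eta^p = d^{p-1}\tilde\vartheta_d$ we get $(\tilde\vartheta_d-\eta)^p = d^{p-1}\tilde\vartheta_d$, so
\[
p\,v(\tilde\vartheta_d-\eta)\>=\>(p-1)vd+v\eta\>.
\]
Dividing (\ref{vd}) by $p$ (and adding $v\eta/p$) gives $\frac{(p-1)vd+v\eta}{p}>v(\eta-K)$, hence $v(\tilde\vartheta_d-\eta)>v(\eta-K)$. Part 6) of Proposition~\ref{propva-K} then yields (\ref{dXeqceq1}). Applying part 5) of Proposition~\ref{propva-K} to $\tilde\vartheta_d=d\vartheta_d$ gives $v(\vartheta_d-K)=v(\tilde\vartheta_d-K)-vd=v(\eta-K)-vd$, which is (\ref{vth-K}). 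Dividing the inequality $v(\eta-\tilde\vartheta_d)>v(\eta-K)$ by $d$ gives $v(\eta/d-\vartheta_d)>v(\eta-K)-vd=v(\vartheta_d-K)$, which is (\ref{dXeqceq2}).

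It remains to check that $K(\vartheta_d)|K$ really is Artin--Schreier (i.e.\ of degree $p$) and that $v$ extends uniquely. Since $v(\eta-K)$ is bounded above in $vK$ by hypothesis, so is $v(\vartheta_d-K)=v(\eta-K)-vd$; in particular $v(\vartheta_d-K)$ does not contain $\infty$, so $\vartheta_d\notin K$. Hence $X^p-X-\eta^p/d^p$ is irreducible over $K$ and $[K(\vartheta_d):K]=p$. For uniqueness of the extension of $v$, I would invoke Proposition~\ref{propve-th} applied to the purely inseparable extension $K(\eta/d)|K=K(\eta)|K$ and the Artin--Schreier extension $K(\vartheta_d)|K$: these are distinct degree-$p$ extensions (one purely inseparable, one separable), so $\eta/d\notin K(\vartheta_d)$, and the inequality $v(\eta/d-\vartheta_d)>v(\vartheta_d-K)$ proved above is exactly the hypothesis (\ref{ve-th}) of that proposition, yielding uniqueness.

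The only step needing care is the determination $v\tilde\vartheta_d=v\eta$, since the assumption (\ref{vd}) is phrased in terms of the set $v(\eta-K)$ rather than $v\eta$ alone; the key observation that makes everything work is simply that $0\in K$, so $v\eta\in v(\eta-K)$, allowing (\ref{vd}) to be specialized to the needed scalar inequality $vd>v\eta$.
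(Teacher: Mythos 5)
Your proof is correct and follows essentially the same route as the paper's: the key inequality $v(\eta-\tilde\vartheta_d)>v(\eta-K)$ is derived from the Frobenius identity $(\tilde\vartheta_d-\eta)^p=d^{p-1}\tilde\vartheta_d$ together with $v\tilde\vartheta_d=v\eta$, and then parts 5) and 6) of Proposition~\ref{propva-K} and Proposition~\ref{propve-th} (applied to $\eta/d$ and $\vartheta_d$) yield the remaining assertions exactly as in the paper. The only immaterial difference is that you determine $v\tilde\vartheta_d=v\eta$ by a Newton--polygon style case analysis on the equation for $\tilde\vartheta_d$, whereas the paper computes it from the Artin--Schreier equation satisfied by $\vartheta_d=\tilde\vartheta_d/d$; both rest on the same observation that $v\eta\in v(\eta-K)$ specializes (\ref{vd}) to $vd>v\eta$.
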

\begin{proof}
Once we prove that
\begin{equation}                            \label{dXeqceq3}
v(\eta-\tilde\vartheta_d)\> >\> v(\eta-K)\>,
\end{equation}
we obtain equation~(\ref{dXeqceq1}) by part 6) of Proposition~\ref{propva-K}, which in turn implies
equation~(\ref{vth-K}) by part 5) of Proposition~\ref{propva-K}. Further, again using part 5) of
Proposition~\ref{propva-K} again, we obtain
\[
v\left(\frac{\eta}{d}-\vartheta_d\right)\>=\>v(\eta-\tilde\vartheta_d)-vd\> >\> v(\eta-K)-vd\>=\>
v(\tilde\vartheta_d-K)-vd \>=\> v(\vartheta_d-K)\>,
\]
which proves (\ref{dXeqceq2}). Hence we will now prove (\ref{dXeqceq3}). We compute:
\begin{equation}          \label{e-th}
(\eta-\tilde\vartheta_d)^p\>=\> \eta^p-\tilde\vartheta_d^p \>=\> \eta^p - \eta^p - d^{p-1}\tilde\vartheta_d \>=\>
- d^{p-1}\tilde\vartheta_d \>.
\end{equation}

\pars
We set $Y=dX$ to obtain that $\vartheta_d$ is a root of the polynomial $d^pX^p-d^{p-1} dX-\eta^p$ and hence also
of the Artin-Schreier polynomial (\ref{trans2}).

\pars
Since $v\eta\in v(\eta-K)$, from (\ref{vd}) we obtain:
\[
(p-1)vd \>=\> vd^{p-1}\> >\> pv\eta - v\eta = (p-1)v\eta\>,
\]
so that
\[
v\frac{\eta^p}{d^p} \>=\> p(v\eta-vd)\><\>0\>.
\]
Hence we have that $v\vartheta_d<0$ and consequently,
$v\vartheta_d^p=pv\vartheta_d<v\vartheta_d$ and
\[
pv\frac{\eta}{d}\>=\>v\frac{\eta^p}{d^p}\>=\>\min\{v\vartheta_d^p,v\vartheta_d\}\>=\>v\vartheta_d^p
\>=\>pv\vartheta_d\>,
\]
which yields that
\[
v\tilde\vartheta_d\>=\> vd+v\vartheta_d \>=\> v\eta\>.
\]
From this together with (\ref{vd}) and (\ref{e-th}) we obtain:
\[
v(\eta-\tilde\vartheta_d)\>=\> \frac{1}{p}v(d^{p-1}\tilde\vartheta_d)\>=\> \frac{1}{p}(vd^{p-1}+v\eta)
\> >\> v(\eta-K)\>,
\]
as desired.

\pars
It remains to prove that $K(\vartheta_d)|K$ is nontrivial (and hence an Artin-Schreier extension),
and that the extension of $v$ from $K$ to $K(\vartheta_d)$ is unique. Since $v(\eta-K)$ is bounded from above in
$vK$ by assumption, the same holds for $v(\eta-K)-vd$ which by (\ref{vth-K})
is equal to $v(\vartheta_d-K)$. This implies in particular that
$\vartheta_d\notin K$, so the extension $K(\vartheta_d)|K$ is nontrivial. Since $\eta$ is purely
inseparable over $K$ by assumption, the same holds for $\eta/d$. Thus our assertion follows from (\ref{dXeqceq2})
together with Proposition~\ref{propve-th}, where we replace $\eta$ by $\eta/d$ and $\vartheta$ by $\vartheta_d\,$.
\end{proof}

\bn
%
%
%
\subsection{Proof of Theorems~\ref{MTfinAS} and~\ref{MTdepAS}}                  \label{sectcharppf}
Throughout, let $(K,v)$ be a valued field of characteristic $p>0$. First, we prove:
\sn
{\it if $K$ admits only finitely many distinct Artin-Schreier extensions, then $(K,v)$ is a semitame field,}
\sn
which is the assertion of Theorem~\ref{MTfinAS}.
Take a purely inseparable (not necessarily immediate) extension
$K(\eta)|K$ of degree $p$ such that $\eta^p\in K$ and that $v(\eta-K)$ is bounded from above by an element in
$vK$.

\begin{proposition}                    \label{infASdn}
Assume that $\alpha\in v(\eta-K)$ and $d\in K$ are such that $\alpha+vd> v(\eta-K)$ (that is,
(\ref{alph+vd}) holds for $\eta$ in place of $a$) and $vd^{p-1}>pv(\eta-K) -v\eta$ (that is, (\ref{vd}) holds).
Then with the notation from Lemma~\ref{dXeqc}, the Artin-Schreier extensions
\[
K(\vartheta_{d^n})|K\>, \;n\in\N\>,
\]
are pairwise distinct, and the extensions of $v$ from $K$ to $K(\vartheta_{d^n})$ are unique for all $n\in\N$.
\end{proposition}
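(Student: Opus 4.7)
The plan is to reduce the proposition to the machinery already developed in Lemma~\ref{dXeqc}, Corollary~\ref{ASinv}, and Lemma~\ref{distinct}, by checking that the hypotheses of Lemma~\ref{dXeqc} remain valid when $d$ is replaced by $d^n$ for every $n\in\N$, and then separating the extensions via their invariant $v(\vartheta-K)$.

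First I would verify that $vd>0$. This is immediate from the hypothesis, since $\alpha\in v(\eta-K)$ and $\alpha+vd>v(\eta-K)$ force $vd>0$. Consequently $vd^n=n\,vd$ is a strictly increasing sequence in $vK$, and in particular $vd^{n(p-1)}=n\,vd^{p-1}\geq vd^{p-1}>pv(\eta-K)-v\eta$ for all $n\geq 1$, so condition~(\ref{vd}) of Lemma~\ref{dXeqc} holds with $d^n$ in place of $d$. Applying that lemma to $d^n$ for each $n\in\N$ then provides an Artin-Schreier extension $K(\vartheta_{d^n})|K$ with a unique extension of $v$ from $K$ to $K(\vartheta_{d^n})$, together with the identity
\[
v(\vartheta_{d^n}-K)\>=\>v(\eta-K)-vd^n\>.
\]

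Next I would invoke Corollary~\ref{ASinv}: for an Artin-Schreier extension, the set $v(\vartheta-K)$ does not depend on the chosen Artin-Schreier generator and is therefore a genuine invariant of the extension. Thus, if $K(\vartheta_{d^n})=K(\vartheta_{d^m})$ for some $m\neq n$, we would have $v(\eta-K)-vd^n=v(\eta-K)-vd^m$ as subsets of $vK$.

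To rule this out I would apply Lemma~\ref{distinct} with $a=\eta$: the hypothesis (\ref{alph+vd}) is exactly the hypothesis of the present proposition, and the conclusion of that lemma says precisely that the translated sets $v(\eta-K)-vd^n$, $n\in\N$, are pairwise distinct. Combined with the invariance above, this shows that the Artin-Schreier extensions $K(\vartheta_{d^n})|K$ are pairwise distinct, and the uniqueness of the extension of $v$ in each case was already recorded in the application of Lemma~\ref{dXeqc}. I do not anticipate a genuine obstacle here; the only point to be careful about is to confirm that the hypothesis (\ref{vd}) scales correctly to $d^n$ (which works because $vd>0$ makes the left-hand side monotone increasing in $n$), so that Lemma~\ref{dXeqc} can be invoked uniformly for all $n$.
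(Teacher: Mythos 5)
Your proof is correct and follows essentially the same route as the paper: apply Lemma~\ref{dXeqc} to each $d^n$ to get the Artin-Schreier extensions with unique valuation extensions and the identity $v(\vartheta_{d^n}-K)=v(\eta-K)-vd^n$, then combine Lemma~\ref{distinct} with the invariance from Corollary~\ref{ASinv} to separate them. Your explicit check that $vd>0$ makes condition~(\ref{vd}) scale from $d$ to $d^n$ is a detail the paper leaves implicit, and it is a worthwhile addition.
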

\begin{proof}
It follows from Lemma~\ref{dXeqc} that for each $n\in\N$, $K(\vartheta_{d^n})|K$ is an Artin-Schreier extension
and the extension of $v$ from $K$ to all $K(\vartheta_{d^n})$ is unique.
From Lemma~\ref{distinct} we infer that the sets $v(\vartheta_{d^n}-K)=v(\eta-K)-vd^n$ are distinct.
Thus by Corollary~\ref{ASinv}, the extensions $K(\vartheta_{d^n})|K$ are distinct.
\end{proof}

\begin{lemma}                        \label{pidegp}
If the perfect hull of $K$ does not lie in the completion
of $(K,v)$, then $(K,v)$ admits a purely inseparable extension $K(\eta)|K$ of degree $p$ such that
$v(\eta-K)$ is bounded from above by an element in $vK$.
\end{lemma}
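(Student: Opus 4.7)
My plan would be to leverage Theorem~\ref{charst}, which says the hypothesis $K^{1/p^\infty} \not\subseteq K^c$ (the negation of (e)) is equivalent to $K^p$ not being dense in $(K,v)$ (the negation of (f)). From the latter there exist $x \in K$ and $\alpha \in vK$ with $v(x - a^p) < \alpha$ for every $a \in K$.

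I would then take $\eta$ to be a $p$-th root of $x$ in an algebraic closure of $K$, with $v$ extended. Then $\eta^p = x \in K$, so $K(\eta)|K$ is purely inseparable of degree at most $p$. Crucially, the Frobenius identity in characteristic $p$ gives $(\eta - a)^p = \eta^p - a^p = x - a^p$ for every $a \in K$, hence
\[
p\cdot v(\eta - a) \>=\> v(x - a^p) \><\> \alpha\>.
\]
Taking $a = \eta$ here would force $\infty < \alpha$, so $\eta \notin K$ and the extension has degree exactly~$p$.

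What remains is to produce an upper bound for $v(\eta - K)$ that actually lies in $vK$ rather than in the divisible hull. The estimate above only gives $v(\eta - K) < \alpha/p$ in $\widetilde{vK}$, and $\alpha/p$ need not belong to $vK$. This is the main (and quite minor) point requiring attention. It is handled by the fact that $vK$ is nontrivial, hence unbounded and cofinal in $\widetilde{vK}$: one can choose $\beta \in vK$ with $\beta \geq \alpha/p$, and then $v(\eta - K) < \beta$, giving the required bound in $vK$.

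In summary, the whole proof is essentially a one-line translation of the non-density of $K^p$ in $K$ (provided by Theorem~\ref{charst}) via the Frobenius identity, with the only subtle point being the descent from $\widetilde{vK}$ back into $vK$ for the upper bound.
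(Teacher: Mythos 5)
Your argument is correct, but it takes a genuinely different route from the paper's. You invoke the equivalence of statements e) and f) of Theorem~\ref{charst} to translate the hypothesis into the non-density of $K^p$ in $(K,v)$, after which the Frobenius identity $(\eta-a)^p=\eta^p-a^p$ and the cofinality of $vK$ in $\widetilde{vK}$ finish the job; your handling of the descent from the bound $\alpha/p\in\widetilde{vK}$ to a bound in $vK$ is fine (one may simply take $\beta=\max\{0,\alpha\}$). The paper instead argues directly: it picks $\tilde\eta\in K^{1/p^{\infty}}\setminus K^c$, replaces it by a $p$-power so that $\tilde\eta^p\in K^c$, notes that $v(\tilde\eta-K)$ is bounded by some $\alpha\in vK$, approximates $\tilde\eta^p$ by some $b\in K$ to within $p\alpha$, sets $\eta=b^{1/p}$, and uses part 6) of Proposition~\ref{propva-K} to conclude $v(\eta-K)=v(\tilde\eta-K)$, bounded by $\alpha$. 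The two approaches are essentially dual: the direction of Theorem~\ref{charst} you use (negation of e) implies negation of f)) is, up to the same Frobenius computation, equivalent to the lemma itself, so your proof is in effect a reduction to a black box proven in \cite{BKdrf}, whereas the paper's proof is self-contained within this paper and in effect reproves that implication. There is no circularity in your version, since the paper already relies on Theorem~\ref{charst} as an external result when passing from Proposition~\ref{propnotdense} to Theorem~\ref{MTfinAS}; what the paper's argument buys is independence from \cite{BKdrf} for this step, while yours buys brevity.
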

\begin{proof}
Assume that the perfect hull $K^{1/p^{\infty}}$ does not lie in the completion $K^c$ of $(K,v)$, and take an
element
$\tilde\eta\in K^{1/p^{\infty}} \setminus K^c$. We may assume that $\tilde\eta^p\in K^c$ (otherwise, we replace
$\tilde \eta$ by ${\tilde\eta}^{p^{\nu}}$ for a suitable $\nu\geq 1$). Since $\tilde\eta\notin K^c$, we have that
$v(\tilde\eta-K)$ is bounded from above by some $\alpha\in vK$ and $v(\tilde\eta^p-K^p)= pv(\tilde\eta-K)$ is
bounded from above by $p\alpha$. On the other hand, since $\tilde{\eta}^p\in K^c$, there
is some $b\in K$ such that $v(\tilde\eta^p-b)>p\alpha$. We choose $\eta\in K^{1/p}$ such that $\eta^p=b$.
Then $pv(\tilde\eta-\eta)=v(\tilde\eta^p-\eta^p)=v(\tilde\eta^p-b)>p\alpha\geq pv(\tilde\eta-K)$, which yields that
$v(\tilde\eta-\eta)>v(\tilde\eta-K)$. From this it follows by part 6) of Proposition~\ref{propva-K}
that $v(\eta-K)=v(\tilde\eta-K)$, showing that also $v(\eta-K)$ is bounded from above by $\alpha$.
\end{proof}

\pars
Since $v(\eta-K)$ is assumed to be bounded from above by an element in $vK$, there is some $d\in K$ which satisfies
$vd^{p-1}>pv(\eta-K) -v\eta$.
Then the same is true for every $d'\in K$ with $vd'\geq
vd$. Hence $\alpha\in v(\eta-K)$ and $d\in K$ can be chosen such that $\alpha+vd> v(\eta-K)$.
Thus we can use Lemma~\ref{pidegp} together with Proposition~\ref{infASdn} to obtain:
\begin{proposition}                         \label{propnotdense}
If the perfect hull of $K$ does not lie in the completion
of $(K,v)$, then $K$ admits infinitely many Artin-Schreier extensions.
\end{proposition}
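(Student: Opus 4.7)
My plan for proving Proposition~\ref{propnotdense} is to combine Lemma~\ref{pidegp} with Proposition~\ref{infASdn}, essentially packaging the discussion in the paragraph preceding the statement.

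First, I would invoke Lemma~\ref{pidegp}: under the hypothesis $K^{1/p^\infty}\not\subset K^c$, it produces a purely inseparable extension $K(\eta)|K$ of degree $p$ with $\eta^p\in K$ and with $v(\eta-K)$ bounded above by some $\alpha_1\in vK$. This is the input object for Proposition~\ref{infASdn}.

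Next, I would exhibit $\alpha\in v(\eta-K)$ and $d\in K$ so that both (\ref{alph+vd}) and (\ref{vd}) hold (with $a=\eta$). Fix any $\alpha\in v(\eta-K)$; taking $c=0$ in the definition gives the convenient choice $\alpha=v\eta$. Since $v(\eta-K)\leq\alpha_1\in vK$, both of the required inequalities reduce to lower bounds on $vd$ whose right-hand sides are single elements of $vK(\eta)$. The valuation on $K$ is nontrivial by standing assumption, so $vK$ is unbounded above; since $[vK(\eta):vK]\leq p$ is finite, $vK$ is cofinal in $vK(\eta)$. Hence an element $d\in K$ whose value exceeds both lower bounds simultaneously can be found. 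With this triple $(\eta,\alpha,d)$ in hand, Proposition~\ref{infASdn} then produces the infinite family of pairwise distinct Artin-Schreier extensions $K(\vartheta_{d^n})|K$, $n\in\N$, which proves the proposition.

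I do not foresee any substantive obstacle: the hard work has already been done in Lemma~\ref{dXeqc}, Lemma~\ref{pidegp}, and Proposition~\ref{infASdn}. The only point needing verification here is that the two inequalities on $d$ can be realised simultaneously, and this is immediate from the cofinality of $vK$ in $vK(\eta)$ together with the nontriviality of $v$.
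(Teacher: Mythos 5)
Your proposal is correct and is essentially the paper's own argument: the paper likewise deduces the proposition by feeding the extension from Lemma~\ref{pidegp} into Proposition~\ref{infASdn}, after noting that the boundedness of $v(\eta-K)$ in $vK$ (together with the nontriviality of the valuation) lets one choose $d$ of sufficiently large value so that both (\ref{alph+vd}) and (\ref{vd}) hold. Your extra remark on the cofinality of $vK$ in $vK(\eta)$ is a harmless and correct elaboration of a point the paper leaves implicit.
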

\sn
By the equivalence of assertions a) and e) of Theorem~\ref{charst}, this proposition proves Theorem~\ref{MTfinAS}.

\parm
We will now prove the assertion of Theorem~\ref{MTdepAS}, which states:
\sn
{\it if $(K,v)$ admits an Artin-Schreier extension with dependent
defect, then it admits infinitely many distinct Artin-Schreier extensions with dependent defect.}
\sn
We assume that $(K,v)$ admits an Artin-Schreier extension with dependent defect, which means that it must be
obtained via the transformation described in Section~\ref{secttransf} from a purely inseparable defect extension
$(K(\eta)|K,v)$ of degree $p$. This is only possible if $v(\eta-K)$ is bounded from above by an element in
$vK$. Also, since the extension is of degree $p$ with nontrivial defect, the defect must be $p$ and the extension
must be immediate. Hence by part 1) of Proposition~\ref{propva-K},
$v(\eta-K)$ has no largest element and consequently, the same holds for the sets $v(\eta-K)-vd^n=
v(\vartheta_{d^n}-K)$. Therefore, since the extension of $v$ from $K$ to $K(\vartheta_{d^n})$ is unique by
Proposition~\ref{infASdn}, it follows from Lemma~\ref{ueGp1} that $(K(\vartheta_{d^n})|K,v)$ is immediate with
defect $p$. As this extension is obtained from a purely inseparable defect extension of degree $p$ by the
transformation described in Section~\ref{secttransf}, this defect is dependent by definition. Finally,
Proposition~\ref{infASdn} shows that the extensions $K(\vartheta_{d^n})|K$, $n\in\N$, are distinct. This
completes our proof.

\bn
%
%
\section{The case of valued fields of  mixed characteristic}                              \label{sectchar0}
Throughout this section, we assume that $(K,v)$ is a valued field of rank $1$ and of characteristic $0$ with
residue field of characteristic $p>0$. Further, we assume that $K$ contains a primitive $p$-th root of unity,
and that $(K(\eta)|K,v)$ is a Kummer extension, where $\eta$ is a $1$-unit with $\eta^p\in K$.

%
%
\subsection{A basic transformation in the mixed characteristic case}            \label{secttransf0}
Given $d\in K$, we transform the minimal polynomial $X^p-\eta^p$ of $\eta$ over $K$ into the polynomial
\begin{equation}                         \label{trans3}
f_{d} (X)\>:=\> X^p+h_d(X)-\eta^p
\end{equation}
with
\[
h_d(X)\>:=\> \sum_{i=1}^{p-1} \binom{p}{i} d^{p-i}X^i\>.
\]

\begin{lemma}                               \label{btrf0}
Assume that $vd<0$ and that
\begin{equation}                             \label{ve-K}
v(\eta-K)\><\>\frac{vp+(p-1)vd}{p}\>.    
\end{equation}
Take $\tilde\vartheta_d$ to be a root of the polynomial (\ref{trans3}). Then
\[
v(\eta-K)\>=\>v(\tilde\vartheta_d-K)\>.
\]
\end{lemma}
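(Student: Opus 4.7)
The plan is to show $v(\tilde\vartheta_d - \eta) > v(\eta - K)$, at which point part~6) of Proposition~\ref{propva-K} immediately yields $v(\tilde\vartheta_d - K) = v(\eta - K)$. The key observation is the rewriting $f_d(X) = (X+d)^p - (d^p + \eta^p)$, obtained by expanding $(X+d)^p = X^p + h_d(X) + d^p$. Hence every root of $f_d$ has the form $\gamma - d$ with $\gamma^p = d^p + \eta^p$, and writing $\tilde\vartheta_d = \gamma - d$ gives $\tilde\vartheta_d - \eta = \gamma - (\eta+d)$, so the task reduces to computing $v(\gamma - (\eta+d))$.

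First I would collect basic valuations. Since $vd < 0 = v\eta$ we have $v(\eta + d) = vd$, and since $v(\binom{p}{i}) = vp$ for $1 \leq i \leq p-1$, the summand $i = 1$ uniquely achieves the minimum value $vp + (p-1)vd$ in $h_d(\eta) = \sum_{i=1}^{p-1} \binom{p}{i} d^{p-i} \eta^i$, giving $v(h_d(\eta)) = vp + (p-1)vd$. Because $vp > vd$, this is strictly greater than $pvd = v((\eta+d)^p)$, so the identity $(\eta+d)^p = \eta^p + d^p + h_d(\eta)$ forces $v(d^p + \eta^p) = pvd$, whence $v\gamma = vd$. Moreover, since $\eta$ is a $1$-unit, $0 = v\eta \in v(\eta - K)$, and (\ref{ve-K}) then forces $vp + (p-1)vd > 0$.

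The central step is to prove $v(\gamma - (\eta+d)) = (vp + (p-1)vd)/p$. I would apply Lemma~\ref{lvep-ap} with $\eta$ replaced by $\gamma$ and $a$ replaced by $\eta + d$, using $\gamma^p - (\eta+d)^p = -h_d(\eta)$; its conclusion is precisely the required equality. The main obstacle is verifying the lemma's hypothesis $v(\gamma - (\eta+d)) < vp/(p-1) + vd$, which I plan to do by contradiction using the factorization
\[
\gamma^p - (\eta+d)^p \>=\> \prod_{j=0}^{p-1} (\gamma - \zeta^j(\eta+d))\>,
\]
where $\zeta$ is a primitive $p$-th root of unity and $v((1-\zeta^j)(\eta+d)) = vp/(p-1) + vd$ for $j \neq 0$ by the formula recalled in Section~\ref{sectdpp}. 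If some factor on the right had value $\geq vp/(p-1) + vd$, then by the ultrametric inequality every factor would have value $\geq vp/(p-1) + vd$, so the total valuation would be at least $p(vp/(p-1) + vd)$, strictly exceeding $v(h_d(\eta)) = vp + (p-1)vd$ because $vp/(p-1) + vd > 0$, a contradiction. This yields $v(\tilde\vartheta_d - \eta) = (vp + (p-1)vd)/p$, which by (\ref{ve-K}) strictly exceeds every element of $v(\eta - K)$, and part~6) of Proposition~\ref{propva-K} closes the argument.
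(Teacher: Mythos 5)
Your proof is correct, and while it shares the paper's overall skeleton --- establish $v(\tilde\vartheta_d-\eta)>v(\eta-K)$ and finish with part 6) of Proposition~\ref{propva-K}, with Lemma~\ref{lvep-ap} as the key tool --- the execution of the central estimate is genuinely different. The paper argues by contradiction: it assumes some $c\in K$ with $v(\tilde\vartheta_d-\eta)\leq v(\eta-c)$, which hands it the hypothesis (\ref{ve-a}) of Lemma~\ref{lvep-ap} for the pair $(\eta,\tilde\vartheta_d)$ for free, and then only needs the crude lower bound $vh_d(\tilde\vartheta_d)>pv(\eta-K)$ (after first pinning down $v\tilde\vartheta_d=0$ by the ultrametric law) to reach a contradiction. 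You instead complete the $p$-th power, writing $f_d(X)=(X+d)^p-(d^p+\eta^p)$, pass to $\gamma=\tilde\vartheta_d+d$ with $\gamma^p-(\eta+d)^p=-h_d(\eta)$, and verify the hypothesis of Lemma~\ref{lvep-ap} for the pair $(\gamma,\eta+d)$ unconditionally via the cyclotomic factorization $\prod_{j}(\gamma-\zeta^j(\eta+d))$; the needed inequality $\frac{vp}{p-1}+vd>0$ is exactly $vp+(p-1)vd>0$, which follows from $0\in v(\eta-K)$ and (\ref{ve-K}), so that step closes. What your route buys is the exact value $v(\tilde\vartheta_d-\eta)=\frac{vp+(p-1)vd}{p}$ (and an exact computation of $vh_d(\eta)$ via the unique minimal summand $i=1$), which is strictly more information than the paper extracts; what the paper's route buys is economy --- it never needs $\zeta$ beyond its use inside Lemma~\ref{lvep-ap}, nor the substitution $X\mapsto X+d$, and it avoids computing any valuation exactly. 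Both arguments are sound.
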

\begin{proof}
We compute the value of the coefficients of $h_d\,$, using our assumption (\ref{ve-K}):
\begin{equation}
v\left(\binom{p}{i} d^{p-i}\right) \>=\> vp+(p-i)vd\>\geq\> vp+(p-1)vd\> >\>pv(\eta-K)\>.
\end{equation}
Since $0=v(\eta-1)\in v(\eta-K)$, this shows that all coefficients of $h_d$ have positive value, while
$v\eta^p=0$; this forces $\tilde\vartheta_d$ to have value $0$ by the ultrametric triangle law. Consequently,
\begin{equation}                         \label{vtth>}
vh_d(\tilde\vartheta_d) \> >\> pv(\eta-K)\>.
\end{equation}

Suppose that there is $c\in K$ such that $v(\tilde\vartheta_d-\eta)\leq v(\eta-c)$. Combined with our assumption
(\ref{ve-K}), this implies that assumption (\ref{ve-a}) of Lemma~\ref{lvep-ap} holds for $\eta$ and for
$\tilde\vartheta_d$ in place of $a$ since $v\eta=0$. Hence by Lemma~\ref{lvep-ap} we have that
\[
v(\tilde\vartheta_d^p-\eta^p) \>=\> pv(\tilde\vartheta_d-\eta)\>.
\]
Using this together with (\ref{vtth>}) and the fact that
$\tilde\vartheta_d^p=\eta^p+h_d(\tilde\vartheta_d)$ by definition of $\tilde\vartheta_d$, we compute:
\begin{eqnarray*}
v(\tilde\vartheta_d-\eta) &=& \frac 1 p v(\tilde\vartheta_d^p-\eta^p) \>=\> \frac 1 p
v(\eta^p+h_d(\tilde\vartheta_d)-\eta^p)\\
&=& \frac 1 p vh_d(\tilde\vartheta_d)\> >\>v(\eta-K)\>,
\end{eqnarray*}
contradicting our assumption. This shows that $v(\tilde\vartheta_d-\eta)> v(\eta-c)$ for all $c\in K$, whence
$v(\tilde\vartheta_d-\eta)> v(\eta-K)$. Hence by part 6) of Proposition~\ref{propva-K},
$v(\eta-K)=v(\tilde\vartheta_d-K)$.
\end{proof}

\bn
%
%
%
\subsection{Proof of Theorem~\ref{MTmixed}}                              \label{sectchar0pf}
We let $(K,v)$ be a valued field of mixed characteristic and rank $1$ which contains a primitive $p$-th root of
unity. We have to prove the assertion of Theorem~\ref{MTmixed}, which states:
\sn
{\it If $(K,v)$ admits a Kummer extension of degree $p$ with super-dependent defect, then it admits infinitely
many distinct Kummer extensions of degree $p$ with super-dependent defect.}
\sn
We assume that $(K(\eta)|K,v)$ is a Kummer extension of degree $p$ with super-dependent defect, and that
$\eta$ is a $1$-unit with $\eta^p\in K$. Since $0=v\eta\in v(\eta-K)$ and $v(\eta-K)$ does not contain a
maximal element, it must contain positive elements. Since $(K(\eta)|K,v)$ is a super-dependent defect extension,
we know that (\ref{sd}) holds.
Since by assumption, $(K,v)$ is of rank $1$ and $v(\eta-K)$ does not contain a maximal element but is bounded,
$vK$ must be dense in $\widetilde{vK}$. Hence there is some $\td\in K$ such that $v\td<0$ and
\begin{equation}                             \label{ve-Kstrong}
v(\eta-K)\><\>\frac{vp}{p}+2v\td\>.
\end{equation}
Then inequality (\ref{ve-K}) of Lemma~\ref{btrf0} is satisfied, because
\[
\frac{vp}{p}+2v\td \><\>\frac{vp+(p-1)v\td}{p}\>.
\]
We obtain from Lemma~\ref{btrf0} that for a root $\tilde\vartheta_{\td}$ of the
polynomial $f_{\td}(X)$ defined in that lemma,
\[
v(\eta-K)\>=\>v(\tilde\vartheta_{\td}-K)\>.
\]

Now we set $X=\td Y$; dividing the resulting polynomial $f_{\td}(\td Y)$ by $\td^p$, we obtain the polynomial
\[
g_d(Y)\>:=\> Y^p + \sum_{i=2}^{p-1} \binom{p}{i} Y^i-\frac{\eta^p}{\td^p}\>.
\]
We observe that
\[
\vartheta_{\td}\>:=\> \frac{\tilde\vartheta_{\td}}{\td}
\]
is a root of $g_{\td}(Y)$ and that by part 5) of Proposition~\ref{propva-K},
\begin{equation}
v(\vartheta_{\td}-K)\>=\> v(\tilde\vartheta_{\td}-K) -v\td\>.
\end{equation}

Now we set $Y=X-1$, which turns the polynomial $g_{\td}(Y)$ into the polynomial
\[
X^p-\left(\frac{\eta^p}{\td^p} +1\right)
\]
with root
\[
\eta_{\td}\>:=\> \vartheta_{\td} +1\>.
\]
Again by part 5) of Proposition~\ref{propva-K}, we have that
\[
v(\eta_{\td}-K)\>=\>v(\vartheta_{\td}-K)\>=\> v(\tilde\vartheta_{\td}-K) -v\td\>=\> v(\eta-K) -v\td
\>\ne\> v(\eta-K)\>,
\]
where the last inequality holds since $v\td\ne 0$ and $v(\eta-K)$ is a bounded subset of an archimedean ordered abelian group. As $vd<0=v\eta$,
\[
\eta_{\td}^p\>=\> \frac{\eta^p}{\td^p} +1
\]
is a $1$-unit.

\pars
Since $vK$ is dense, there are infinitely many $\td'\in K$ with $v\td'<0$ that satisfy (\ref{ve-Kstrong})
in place of $\td$. With the same argument as above, we see that $v(\eta-K) -v\td\ne v(\eta-K) -v\td'$ if
$v\td\ne v\td'$. In this way we obtain infinitely many Kummer extensions $(K(\eta_{\td})|
K,v)$ with distinct sets $v(\eta_{\td}-K)$, which by Proposition~\ref{dist_approx}
shows that these extensions are pairwise distinct.

\pars
Since $(K(\eta)|K,v)$ is a nontrivial immediate extension, part 1) of Proposition~\ref{propva-K}
shows that the set $v(\eta-K)$ has no maximal element, while being bounded by assumption. The same is consequently
true for the sets $v(\eta-K)-v\td=v(\eta_{\td}-K)$. As the rank of $(K,v)$ is assumed to be $1$, part 2) of
Lemma~\ref{c2} shows that $(K(\eta_{\td})|K,v)$ is immediate and has defect $p$. Finally, this defect is
super-dependent since
\[
v(\eta_{\td}-K)\>=\> v(\eta-K)-v\td \><\>\frac{vp}{p}+2v\td-v\td\>=\>\frac{vp}{p}+v\td\>.
\]

\bn
%
%
\section{Filling a gap in \cite{Ku6}}                              \label{sectgap}
In \cite[Lemma 2.31]{Ku6} the following is stated:
\begin{lemma}                               \label{uniqextv}
Assume that $\chara K=p>0$ and $(K(\vartheta)|K,v)$ is an Artin-Schreier extension with Artin-Schreier generator
$\vartheta$. If $\dist (\vartheta,K)\leq 0^-$ and $v(\vartheta-K)$ has no maximal element, then the
extension of $v$ from $K$ to $K(\vartheta)$ is unique and $(K(\vartheta)|K,v)$ is immediate with defect $p$.
\end{lemma}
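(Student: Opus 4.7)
The plan is to reduce the lemma to earlier results in the paper, with the critical step being to show that $\vartheta\notin K^h$ for some henselization $K^h$ of $(K,v)$. Once this is established, part 1) of Lemma~\ref{c1} (applicable because $K(\vartheta)|K$ is a normal extension of prime degree) gives uniqueness of the extension of $v$ from $K$ to $K(\vartheta)$, after which Lemma~\ref{ueGp1} immediately yields that $(K(\vartheta)|K,v)$ is immediate with defect $p$.

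I would first unpack the hypotheses. Since $v(\vartheta-K)$ has no maximal element, part 2) of Proposition~\ref{propva-K} gives $v(\vartheta-K)\subseteq vK$; hence Lemma~\ref{translate} applies, and the assumption $\dist(\vartheta,K)\leq 0^-$ translates into the concrete inequality $v(\vartheta-K)<0$, i.e., $v(\vartheta-c)<0$ for every $c\in K$. To show $\vartheta\notin K^h$, I would fix an algebraic closure $K\ac$ together with an extension of $v$ to it extending the chosen one on $K(\vartheta)$, and let $K^h\subseteq K\ac$ be the corresponding henselization. The key point is that $K$ is dense in $K^h$ in the valuation topology, because every element of $K^h$ arises through iterated Hensel (Newton) approximations from $K$. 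If $\vartheta$ were in $K^h$, one could therefore find $c\in K$ with $v(\vartheta-c)>0$, contradicting the inequality just derived. Hence $\vartheta\notin K^h$, and the two remaining conclusions follow from the earlier lemmas as outlined.

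The step I expect to require the most care is the density of $K$ in $K^h$ in the second paragraph. In rank $1$ this is routine and is explicitly invoked in the proof of part 2) of Lemma~\ref{c2}; but since the present lemma imposes no rank restriction, the justification has to appeal to the general construction of the henselization as a directed union of Hensel-approximable \'etale extensions, where convergence of Newton iterates provides the needed approximation in the valuation topology. Everything else is essentially bookkeeping through Proposition~\ref{propva-K}, Lemma~\ref{translate}, Lemma~\ref{c1}, and Lemma~\ref{ueGp1}.
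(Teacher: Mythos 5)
Your overall frame agrees with the paper's: translate $\dist(\vartheta,K)\leq 0^-$ into $v(\vartheta-K)<0$ via part 2) of Proposition~\ref{propva-K} and Lemma~\ref{translate}, reduce everything to showing $\vartheta\notin K^h$, and then conclude with part 1) of Lemma~\ref{c1} and Lemma~\ref{ueGp1}. The gap is exactly at the step you yourself flagged: the claim that $K$ is dense in $K^h$ in the valuation topology is false for valuations of rank greater than $1$, and the lemma imposes no rank restriction. Newton/Hensel iteration starting from an approximate root $c_0$ with $v(f(c_0))=\gamma>0$ produces iterates whose distance to the true root has values roughly $2^n\gamma$; these are cofinal only in the convex subgroup of $vK$ generated by $\gamma$, not in $vK$ itself, so the iterates converge in the coarsened topology belonging to that convex subgroup but not in the full valuation topology. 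The precise general statement is \cite[Theorem~1]{Ku4} (quoted in Section~\ref{sectgap}): for $\vartheta\in K^h\setminus K$ the set $v(\vartheta-K)$ is merely cofinal with some coset $\alpha+H$ of a \emph{nontrivial} convex subgroup $H$ of $vK$, and this coset can lie entirely below $0$ (scale any such $\vartheta$ by $d\in K$ with $vd<-H$). Hence $v(\vartheta-K)<0$ alone does not exclude $\vartheta\in K^h$, and your argument proves nothing beyond the rank~$1$ case already covered by part 2) of Lemma~\ref{c2}.

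The paper's proof necessarily uses the Artin-Schreier structure, which your argument never touches --- a symptom of the problem, since the key step should not go through for an arbitrary algebraic element with $v(\vartheta-K)<0$. Concretely: supposing $\vartheta\in K^h$, one obtains the coset $\alpha+H$ with $H\neq\{0\}$ cofinal in $v(\vartheta-K)$ and $\alpha+H<0$; after normalizing so that $v\vartheta\geq\alpha$ and multiplying by $d$ with $vd=-\alpha$, the element $d\vartheta$ becomes a root of $X^p-d^{p-1}X-d^pb$, which is compared with the purely inseparable root $\eta$ of $X^p-d^pb$. A value computation shows $v(d\vartheta-\eta)>v(d\vartheta-K)$, and then \cite[Theorem~2]{Ku4} forces $d\vartheta\notin K^h$, the desired contradiction. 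To repair your proof you would need to replace the density claim by this (or an equivalent) use of the approximation theorems for henselizations.
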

\n
In the proof it is written: ``In \cite{Ku4} we show that the assumption that $\dist (\vartheta,K)<0$ implies that
the extension of $v$ from $K$ to $K(\vartheta)$ is unique.'' Since $v(\vartheta-K)$ has no maximal element,
Lemma~\ref{ueGp1} shows that this assertion indeed implies that $(K(\vartheta)|K,v)$ is immediate. In addition,
$\dist (\vartheta,K)<\infty$ implies that $\vartheta\notin K$, so the extension is nontrivial and thus has defect
$p$.

However, the assertion was never proven in \cite{Ku4}. In order to complete the proof of Lemma~\ref{uniqextv}, we
will prove it here. As $v(\vartheta-K)$ has no maximal element, we know from part part 2) of
Proposition~\ref{propva-K} that $v(a-K)\subseteq vK\subseteq \widetilde{vK}$. Hence by Lemma~\ref{translate}
the assumption $\dist (\vartheta,K)\leq 0^-$ is equivalent to $v(\vartheta-K)<0$.

\pars
By part 1) of Lemma~\ref{c1}, it suffices to prove that $\vartheta\notin K^h$. Suppose
otherwise. Then by \cite[Theorem~1]{Ku4}, there are $\alpha\in vK$ and a convex subgroup $H\ne\{0\}$ of $vK$
such that the coset $\alpha+H$ is cofinal in $v(\vartheta-K)$. Since $v(\vartheta-K)<0$, we must have that
 $\alpha+H<0$, which yields that $-\alpha>H$ and $-\alpha>0$. As $\alpha\in\alpha+H$, there is some $c\in K$
such that $v(\vartheta-c)\geq\alpha$. Since $\vartheta-c$ is also an Artin-Schreier generator of the extension and
$v(\vartheta-c-K)=v(\vartheta-K)$, we may assume w.l.o.g.\ that $v\vartheta\geq\alpha$.

Let $X^p-X-b$ with $b\in K$ be the minimal polynomial of $\vartheta$ over $K$, and take $d\in K$ with $vd=-\alpha$.
Then by part 5) of Proposition~\ref{propva-K}, $H$ is cofinal in $v(d\vartheta-K)$. Further,
$d\vartheta$ is a root of $d^{-p}X^p-d^{-1}X-b$ and hence also of
\[
X^p-d^{p-1}X-d^pb\>.
\]
Let $\eta$ be the root of $X^p-d^p b$. We wish to show that
\[
v(d\vartheta-K)\>=\>v(\eta-K)\>.
\]
We compute:
\begin{eqnarray*}
pv(d\vartheta-\eta) &=& v((d\vartheta)^p-\eta^p)\>=\>v(d^p\vartheta+d^pb-d^pb)\>=\>vd^p\vartheta\\
&\geq& -p\alpha  +\alpha\>\geq\> -\alpha \> >\> H\>.
\end{eqnarray*}
Since $H$ is a convex subgroup of $vK$, this implies that
$v(d\vartheta-\eta) \>\geq\> -\frac 1 p \alpha \> >\> H\>$, and as $H$ is cofinal in $v(d\vartheta-K)$, we obtain:
\[
v(d\vartheta-\eta) \> >\> v(d\vartheta-K)\>.
\]
By construction, $K(\eta)|K$ is purely inseparable, hence by Theorem~2 of~\cite{Ku4},
$d\vartheta$ cannot lie in the henselization $K^h$. We have thus shown that $\vartheta\notin K^h$, as desired.

\end{document}